\newtheorem{thm}{Theorem}[section]
\newtheorem{lem}[thm]{Lemma}
\theoremstyle{definition}
\newtheorem{rem}{Remark}[section]
\title[The associated groups of quandles]{Computing the associated groups of quandles with tools from group homology theory}
\author[K. Ishikawa]{Katsumi Ishikawa}
\address{Research Institute for Mathematical Sciences, Kyoto University, Kyoto 606-8502, Japan}
\email{katsumi@kurims.kyoto-u.ac.jp}
\keywords{quandle, associated group, central extension}
\subjclass[2020]{Primary: 20N02. Secondary: 20E22, 57K10}
\date{}
\begin{document}
\begin{abstract}
We give a complete description of the associated group of any quandle as a central extension of the inner-automorphism group. As an application, we compute the second quandle homology groups of quandles of some families, including those of Alexander quandles.
\end{abstract}
\maketitle
\section{Introduction}
A quandle, introduced by Joyce \cite{joy} and Matveev \cite{mat}, is an algebraic system with three axioms, and (a conjugacy class of) a group with the conjugacy operation is a typical example. Quandles work well in knot theory, where various knot invariants have been discovered. For example, quandle $2$-cocycle invariants \cite{cjkls} and non-abelian cocycle invariants \cite{cegs} defined for cocycles on a quandle are rather simple and easy to calculate, but powerful and useful to distinguish knots and links. Here, different choices of cocycles give rise to different invariants even if the quandle $X$ is fixed, but an invariant obtained from the associated group ${\rm As}(X)$ is known to be universal among them (see \cite[Section 5.3]{nos1}). The associated group is the fundamental group of the quandle space \cite{nos0} of the quandle $X$, and it is known (\cite{eis2}) that we can compute the second quandle homology group $H_2^Q(X)$ from ${\rm As}(X)$. Thus, it is a quite important problem in quandle theory to determine the associated groups of quandles.
\par However, it is difficult to compute the associated group of a quandle in general. Although the computations for specific families of quandles are known (e.g., \cite{cla, bimnp} for some Alexander quandles), there has been no known procedure that works for any quandles. To solve this difficulty, some researchers consider the inner-automorphism group ${\rm Inn}(X)$. This group is more understandable than ${\rm As}(X)$, and there exists an epimorphism $\psi: {\rm As}(X) \to {\rm Inn}(X)$; since this is a central extension, finding the kernel is essential to investigate the structure of ${\rm As}(X)$. In fact, \cite{nos2} describes ${\rm Ker}\,\psi$ in terms of the group homology of ${\rm Inn}(X)$, though we need some conditions including the connectivity and the description is localized.
\par In this paper, we show a procedure to compute the structure of the associated group ${\rm As}(X)$ from group (co)homology without any condition or localization (Theorem \ref{main-thm}). The point is to consider an extension $\overline{\rm Inn}(X)$ of ${\rm Inn}(X)$ instead of ${\rm Inn}(X)$: Using the abelianization map $\alpha$ of ${\rm As}(X)$, we describe ${\rm Ker}(\alpha \times \psi)$ as a quotient of the second homology of the image $\overline{\rm Inn}(X)$ of $\alpha \times \psi$. Although we still have a problem of finding the second group homology, homology theory of groups is far more developed than that of quandles. Furthermore, it is often easier to calculate $H_2(\overline{\rm Inn}(X))$ than $H_2({\rm Inn}(X))$, which is another reason to focus on $\overline{\rm Inn}(X)$. 
\par By Theorem \ref{main-thm}, we can compute and examine ${\rm As}(X)$ for many quandles. Consequently, we find the second quandle homology groups of (generalized) Alexander quandles. In what follows, for a $\mathbb{Z}[T^{\pm 1}]$-module $M$, or equivalently for an abelian group $M$ and an automorphism $T$ of $M$, we denote $M/(1-T)M$ by $M_T$ and ${\rm Ker}\,(1-T) \subset M$ by $M^T$.

\begin{thm}\label{alex-thm0}
Let $M$ be a $\mathbb{Z}[T^{\pm 1}]$-module and $X = Q_{M,T}$ the Alexander quandle. Then, we have
$$H_2^Q(X) \cong \left( (\Lambda^2 (1-T)M)_T \oplus ((1-T)M)^T \oplus \mathbb{Z}^{\oplus (|M_T| -1)} \right)^{\oplus M_T},$$
where $T$ acts on $\Lambda^2 (1-T)M$ diagonally.
\end{thm}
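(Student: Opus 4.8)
The strategy is to apply Theorem~\ref{main-thm} to $X = Q_{M,T}$ to determine ${\rm As}(X)$ completely, and then to read off $H_2^Q(X)$ using \cite{eis2}. The groups involved are easy to identify for Alexander quandles. Writing $S_y\colon z\mapsto Tz+(1-T)y$ for the right translations, one has $S_yS_{y'}^{-1} = \tau_{(1-T)(y-y')}$ (translation by $(1-T)(y-y')$) and $S_0\colon z\mapsto Tz$, so ${\rm Inn}(Q_{M,T}) \cong (1-T)M \rtimes \langle T\rangle$, with $(1-T)M$ the normal translation subgroup and $\langle T\rangle \le {\rm Aut}(M)$ the cyclic group generated by $T$; the ${\rm Inn}$-orbits are the cosets of $(1-T)M$, so $\pi_0(X) \cong M_T$ and ${\rm As}(X)^{\rm ab} \cong \mathbb{Z}^{\oplus M_T}$. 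Hence $\overline{\rm Inn}(X) = {\rm im}(\alpha\times\psi)$ is the subgroup of $\mathbb{Z}^{\oplus M_T}\times{\rm Inn}(X)$ generated by the pairs $([m],S_m)$, $m\in M$; a short computation identifies it with an extension
\[
1 \;\longrightarrow\; (1-T)^2 M \;\longrightarrow\; \overline{\rm Inn}(X) \;\longrightarrow\; \mathbb{Z}^{\oplus M_T} \;\longrightarrow\; 1 ,
\]
the normal subgroup being the translation part, on which $\mathbb{Z}^{\oplus M_T}$ acts through the total-degree map $\mathbb{Z}^{\oplus M_T}\to\mathbb{Z}\xrightarrow{\,T\,}{\rm Aut}$; for connected $Q_{M,T}$ (where $M_T=0$) this collapses to $\overline{\rm Inn}(Q_{M,T})\cong M\rtimes_T\mathbb{Z}$.

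The core of the argument is the computation of $H_2(\overline{\rm Inn}(X))$ together with the quotient of it appearing in Theorem~\ref{main-thm}. In the connected case I would use the Wang exact sequence of $1\to M\to M\rtimes_T\mathbb{Z}\to\mathbb{Z}\to 1$, together with $H_1(M;\mathbb{Z})=M$ and the classical isomorphism $H_2(M;\mathbb{Z})\cong\Lambda^2 M$ for abelian $M$: this yields $0\to(\Lambda^2 M)_T\to H_2(M\rtimes_T\mathbb{Z})\to M^T\to 0$ (the diagonal $T$-action on $\Lambda^2M$ being precisely the action induced on $H_2$ of an abelian group), which, after accounting for the correction term of Theorem~\ref{main-thm}, produces the summand $(\Lambda^2 M)_T\oplus M^T$. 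In general I would run the Lyndon--Hochschild--Serre spectral sequence of the displayed extension over $A:=\mathbb{Z}^{\oplus M_T}$: in total degree $\le 2$ the $E^2$-page is built from $H_p(A)=\Lambda^p A$ with coefficients in the $T$-twisted homology $H_q((1-T)^2M)=\Lambda^q (1-T)^2M$, and after passing to (co)invariants the $q=0,1$ rows reproduce summands of type $(\Lambda^2\,\cdot)_T$ and $(\cdot)^T$, while the bottom row $\Lambda^2 A$ contributes a free summand.

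It remains to combine this with the passage from ${\rm As}(X)$ to $H_2^Q(X)$ given by \cite{eis2}. Since $Q_{M,T}$ carries the translation automorphisms $\tau_m$ ($m\in M$), which permute the set $M_T$ of components transitively, all components contribute isomorphically, which explains the outer direct sum $(\,\cdot\,)^{\oplus M_T}$; the $\pi_0$-combinatorics on the \cite{eis2} side, together with the $\Lambda^2 A$-contribution above, assembles into the free summand $\mathbb{Z}^{\oplus(|M_T|-1)}$ per component; and upon organizing the $T$-action one recognizes the remaining summands as $(\Lambda^2 (1-T)M)_T$ and $((1-T)M)^T$, which is the asserted formula. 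As a sanity check, for $X=Q_{\mathbb{Z}/3,-1}$ one has $(1-T)M=\mathbb{Z}/3$, $(\Lambda^2\mathbb{Z}/3)_T=0$, $(\mathbb{Z}/3)^T=0$ and $|M_T|=1$, so $H_2^Q(X)=0$, in agreement with known computations.

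The step I expect to be the main obstacle is the middle one: carrying the $T$-twisting through the spectral sequence, deciding which differentials vanish and which extensions of abelian groups split, and --- most delicately --- disentangling the genuine per-component contribution from the cross-terms coming from the ``mixed'' defining relations $e_xe_y=e_ye_{x*y}$ with $x,y$ in distinct components. It is exactly this last point that must convert the module $(1-T)^2M$ visible in $\overline{\rm Inn}(X)$ into the module $(1-T)M$ appearing in the answer, and making the disconnected case come out right requires analysing ${\rm As}(X)$ itself, not merely $\overline{\rm Inn}(X)$.
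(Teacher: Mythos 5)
Your overall strategy (apply Theorem \ref{main-thm}, compute $H_2(\overline{\rm Inn}(X))$, then pass to $H_2^Q$ via Eisermann's theorem) is the one the paper follows, and your structural identifications are correct: ${\rm Inn}(X)\cong(1-T)M\rtimes\langle T\rangle$, $\mathcal{O}_X\cong M_T$, and $\overline{\rm Inn}(X)$ an extension of $\mathbb{Z}^{\oplus M_T}$ by $(1-T)^2M$ with the action factoring through total degree. But the step you defer as ``the main obstacle'' is set up in a harder way than necessary. Running the Lyndon--Hochschild--Serre spectral sequence over the base $\mathbb{Z}^{\oplus M_T}$ forces you to handle twisted homology of a possibly infinite-rank free abelian group acting nontrivially on the fibre, potential $d_2$-differentials, and extension problems. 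The paper instead uses the extension $1\to\overline{\rm Inn}_0(X)\to\overline{\rm Inn}(X)\to\mathbb{Z}\to1$ (kernel of the total-degree map $\bar\epsilon$): the kernel $\overline{\rm Inn}_0(X)\subset\epsilon_0^{-1}(0)\times(1-T)M$ is \emph{abelian}, so $H_2(\overline{\rm Inn}_0(X))=\Lambda^2\overline{\rm Inn}_0(X)$ and the spectral sequence degenerates to the short exact sequence (\ref{ss-ses}). Even then one must prove that this sequence splits compatibly with Pontryagin products; the paper builds an explicit section from $\langle\bar{s}_0,-\rangle$ on the subgroup generated by $\bar{s}_0$ and $\overline{\rm Inn}_0(X)^T$. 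Your outline does not address the splitting at all, and it is needed even in the connected case.

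More seriously, you misattribute the free summand $\mathbb{Z}^{\oplus(|M_T|-1)}$. It does not come from a $\Lambda^2\mathbb{Z}^{\oplus M_T}$-contribution to $H_2(\overline{\rm Inn}(X))$ surviving into $A$: in fact $A\cong(\Lambda^2(1-T)M)_T\oplus((1-T)M)^T$ exactly, with no free part. Writing $I={\rm Stab}_{\overline{\rm Inn}_0(X)}(x)=\overline{\rm Inn}_0(X)\cap(\mathbb{Z}^{\oplus M_T}\times\{{\rm id}\})$, all classes of type $\Lambda^2 I$ and $I\otimes(1-T)M$ are precisely what the Pontryagin-product relations of Theorem \ref{main-thm} kill, and this same quotient is what converts $(1-T)^2M$ into $(1-T)M$, via $\overline{\rm Inn}_0(X)/I\cong(1-T)M$ --- no analysis of ``mixed relations'' in ${\rm As}(X)$ beyond Theorem \ref{main-thm} is required. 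The free summand arises on the Eisermann side instead: $H_2^Q(X)$ is the sum over components of $(\tilde I)_{\rm ab}$ with $\tilde I={\rm Stab}_{{\rm As}(X)}(x_0)\cap{\rm Ker}\,\epsilon$, a central extension $A\hookrightarrow\tilde I\twoheadrightarrow I$. One must (i) show this extension is trivial (the paper uses that $I$ is free abelian together with the vanishing in $A$ of Pontryagin products from $I$), and (ii) identify $I$ with ${\rm Ker}\bigl(\epsilon_0\oplus\xi:\mathbb{Z}^{\oplus M_T}\to\mathbb{Z}\oplus(1-T)M/(1-T)^2M\bigr)$ and compute its rank to be $|M_T|-1$. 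Neither step appears in your outline, so the passage from $A$ to the stated formula for $H_2^Q(X)$ is not justified as written.
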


\begin{thm}\label{ga-thm0}
Let $G$ be a group and $\phi$ an automorphism of $G$. If the generalized Alexander quandle $Q_{G, \phi}$ is connected and the endomorphism $1- \phi$ on the abelianization $G_{\rm ab}$ of $G$ is invertible, then we have
$$H_2^Q(Q_{G,\phi}) \cong H_2(G)_{\phi}.$$
\end{thm}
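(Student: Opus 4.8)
The plan is to run the machinery of Theorem~\ref{main-thm} for $X = Q_{G,\phi}$, reducing the computation of $H_2^Q(X)$ to the group homology of $G \rtimes_\phi \mathbb{Z}$. Recall the quandle operation $x * y = \phi(xy^{-1})y$. The first task is to identify the relevant groups. A direct check shows that the inner automorphism $S_y\colon x \mapsto x*y$ factors as $S_y = R_{\phi(y)^{-1}y}\circ\phi$, where $\phi$ is regarded as a permutation of $G$ and $R_g$ denotes right translation by $g$; consequently ${\rm Inn}(X)$ is generated by $\phi$ together with the right translations $R_g$ for $g$ ranging over the normal subgroup $N$ of $G$ generated by $\{g^{-1}\phi(g)\mid g\in G\}$, and $X$ is connected precisely when $N = G$ (the orbit of the unit under ${\rm Inn}(X)$ being exactly $N$). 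Thus, under our connectivity hypothesis, ${\rm Inn}(X)\cong G\rtimes_\phi\langle\phi\rangle$ and, since connectivity also makes $1-\phi$ surjective on $G_{\rm ab}$, one gets $[{\rm Inn}(X),{\rm Inn}(X)] = G$.

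Next I would describe $\overline{\rm Inn}(X)$. As the image of $\alpha\times\psi$, it surjects onto $\alpha({\rm As}(X)) \cong \mathbb{Z}$ (using that $X$ is connected, so ${\rm As}(X)_{\rm ab}\cong\mathbb{Z}$), and the kernel of this surjection is carried isomorphically by the induced map $\bar\psi$ onto $\psi([{\rm As}(X),{\rm As}(X)]) = [{\rm Inn}(X),{\rm Inn}(X)] = G$. Hence $\overline{\rm Inn}(X)$ sits in a (split, since $\mathbb{Z}$ is free) extension $1\to G\to\overline{\rm Inn}(X)\to\mathbb{Z}\to 1$; the conjugation action of a lift of a generator of $\mathbb{Z}$ agrees with $\phi$ up to an inner automorphism of $G$, and therefore $\overline{\rm Inn}(X)\cong G\rtimes_\phi\mathbb{Z}$. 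Feeding this into Theorem~\ref{main-thm}, $H_2^Q(X)$ is expressed through ${\rm Ker}(\alpha\times\psi)$, itself a quotient of $H_2(\overline{\rm Inn}(X)) = H_2(G\rtimes_\phi\mathbb{Z})$; one then checks that under the present hypotheses — connectivity of $X$ together with $1-\phi$ invertible on $G_{\rm ab}$, which force $(G_{\rm ab})_\phi = 0$ and ${\rm As}(X)_{\rm ab}\cong\mathbb{Z}$ — the correction term appearing in Theorem~\ref{main-thm} vanishes, so that $H_2^Q(X)\cong H_2(G\rtimes_\phi\mathbb{Z})$.

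It remains to compute $H_2(G\rtimes_\phi\mathbb{Z})$, and for this I would use the Wang exact sequence of $1\to G\to G\rtimes_\phi\mathbb{Z}\to\mathbb{Z}\to 1$:
\[
H_2(G)\xrightarrow{\ 1-\phi_*\ }H_2(G)\longrightarrow H_2(G\rtimes_\phi\mathbb{Z})\longrightarrow H_1(G)\xrightarrow{\ 1-\phi\ }H_1(G),
\]
which yields a short exact sequence $0\to H_2(G)_\phi\to H_2(G\rtimes_\phi\mathbb{Z})\to (G_{\rm ab})^\phi\to 0$. Since $1-\phi$ is invertible on $G_{\rm ab}$ we have $(G_{\rm ab})^\phi = {\rm Ker}(1-\phi) = 0$, so $H_2(G\rtimes_\phi\mathbb{Z})\cong H_2(G)_\phi = H_2(G)/(1-\phi_*)H_2(G)$, and combined with the previous paragraph this gives $H_2^Q(Q_{G,\phi})\cong H_2(G)_\phi$.

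The Wang-sequence computation is routine, and the identification of ${\rm Inn}(X)$ and $\overline{\rm Inn}(X)$ is elementary; the one mildly delicate point there is that the semidirect-product action is $\phi$ only up to inner automorphisms, which is harmless since $G\rtimes_\phi\mathbb{Z}$ depends only on the class of $\phi$ in ${\rm Out}(G)$. I expect the real obstacle to be the bookkeeping inside Theorem~\ref{main-thm}: one must verify that its ``quotient of $H_2(\overline{\rm Inn}(X))$'' is all of $H_2(\overline{\rm Inn}(X))$ in this situation, and that this quantity indeed computes $H_2^Q(X)$ on the nose. This is exactly the step where both hypotheses on $Q_{G,\phi}$ are used, and the outcome is consistent with Theorem~\ref{alex-thm0}, which for a connected Alexander quandle with $1-T$ invertible yields $H_2^Q = (\Lambda^2 M)_T = H_2(M)_T$.
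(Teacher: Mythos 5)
Your overall route is the same as the paper's: identify $\overline{\rm Inn}_0(Q_{G,\phi})$ with $G$ (so $\overline{\rm Inn}(Q_{G,\phi})\cong G\rtimes_\phi\mathbb{Z}$), compute $H_2(\overline{\rm Inn}(Q_{G,\phi}))\cong H_2(G)_\phi$ from the Wang/Lyndon--Hochschild--Serre sequence (\ref{ss-ses}) using the vanishing of $(G_{\rm ab})^\phi$, and then feed this into Theorem \ref{main-thm}. That part is fine. However, the two steps you explicitly defer as ``the real obstacle'' are precisely where the proof still has a gap, and the justification you gesture at for the first of them is not the operative one. The vanishing of the correction term $\sum_{x}\langle \bar{s}_x,{\rm Stab}_{\overline{\rm Inn}(X)}(x)\rangle$ has nothing to do with $(G_{\rm ab})_\phi=0$ or ${\rm As}(X)_{\rm ab}\cong\mathbb{Z}$ (those hold for any connected quandle, and for connected quandles in general the correction term need not vanish). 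The missing observation is that ${\rm Inn}_0(Q_{G,\phi})$ acts on $Q_{G,\phi}$ by right translations of $G$ on itself, hence \emph{freely}; therefore ${\rm Stab}_{\overline{\rm Inn}_0(X)}(x_0)$ is trivial, and by Lemma \ref{simp-lem} the whole Pontryagin correction subgroup is $0$, giving $A\cong H_2(\overline{\rm Inn}(X))\cong H_2(G)_\phi$ with no further hypotheses.

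The same freeness also closes your second deferred step, the passage from $A$ to $H_2^Q$. By Theorem \ref{eis-thm} (with one orbit), $H_2^Q(X)\cong({\rm Stab}_{{\rm As}(X)}(x_0)\cap{\rm Ker}\,\epsilon)_{\rm ab}$, and this stabilizer is a central extension of ${\rm Stab}_{\overline{\rm Inn}_0(X)}(x_0)$ by $A$; since the former is trivial, the stabilizer equals $A$, which is already abelian, so $H_2^Q(Q_{G,\phi})\cong A\cong H_2(G)_\phi$. (Note also that it is \emph{not} true in general that $A=H_2(\overline{\rm Inn}(X))$ computes $H_2^Q$ ``on the nose''; for Alexander quandles with nontrivial stabilizers the paper has to split a nontrivial extension $A\hookrightarrow\tilde I\twoheadrightarrow I$ first. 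It is only the freeness of the action here that makes the identification immediate.) With these two observations supplied, your argument becomes a complete proof essentially identical to the paper's; as written, it names the gap rather than filling it.
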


\noindent 
Moreover, we find a K$\ddot{\rm u}$nneth formula of second homology groups. In the following theorem, for a connected quandle $X$ we denote $\psi({\rm Ker}\, \alpha) \subset {\rm Inn}(X)$ by ${\rm Inn}_0(X)$.

\begin{thm}\label{prod-thm0}
Let $X,Y$ be connected quandles. Fixing any points $x_0 \in X$ and $y_0 \in Y$, we define an automorphism $\sigma$ of ${\rm Inn}_0(X) \times {\rm Inn}_0(Y)$ by $\sigma(f, g) = (s_{x_0}^{-1}fs_{x_0}, s_{y_0}^{-1} g s_{y_0})$, where $s_z$ denotes the right-multiplication map $\bullet * z$. Then, we have
$$H_2^Q(X\times Y) \cong H_2^Q(X) \oplus H_2^Q(Y) \oplus ({\rm Inn}_0(X)_{\rm ab} \otimes {\rm Inn}_0(Y)_{\rm ab})_\sigma.$$
\end{thm}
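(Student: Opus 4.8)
The plan is to run the computation through Theorem~\ref{main-thm} (together with \cite{eis2}), which for a connected quandle $Z$ expresses $H_2^Q(Z)$ functorially in terms of $H_2(\overline{\rm Inn}(Z))$. So I first record the shape of $\overline{\rm Inn}$ on connected quandles: since ${\rm As}(Z)_{\rm ab}\cong\mathbb{Z}$ (all $e_z$ have the same image), $\alpha\times\psi$ sends ${\rm As}(Z)$ onto a subgroup of $\mathbb{Z}\times{\rm Inn}(Z)$, and $\overline{\rm Inn}(Z)$ lies in a (necessarily split) extension
\[ 1\longrightarrow{\rm Inn}_0(Z)\longrightarrow\overline{\rm Inn}(Z)\longrightarrow\mathbb{Z}\longrightarrow1,\qquad\overline{\rm Inn}(Z)\cong{\rm Inn}_0(Z)\rtimes\mathbb{Z}, \]
the chosen generator of $\mathbb{Z}$ (the image of $e_{z_0}$) acting on ${\rm Inn}_0(Z)$ by conjugation by $s_{z_0}$. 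Moreover $X\times Y$ is connected: for a connected quandle the stabiliser of $z_0$ in ${\rm Inn}(Z)$ contains $s_{z_0}$, which generates ${\rm Inn}(Z)_{\rm ab}$, so ${\rm Inn}_0(Z)$ already acts transitively on $Z$; hence ${\rm Inn}_0(X)\times{\rm Inn}_0(Y)\subseteq{\rm Inn}(X\times Y)$ acts transitively on $X\times Y$. Thus it suffices to identify $\overline{\rm Inn}(X\times Y)$ and compute its second group homology.

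Step 1: I claim $\overline{\rm Inn}(X\times Y)\cong\overline{\rm Inn}(X)\times_{\mathbb{Z}}\overline{\rm Inn}(Y)$, the fibre product over the common quotient $\mathbb{Z}\cong{\rm As}(-)_{\rm ab}$. The canonical map ${\rm As}(X\times Y)\to{\rm As}(X)\times{\rm As}(Y)$ induced by the projections, together with $\alpha$, sends $e_{(x,y)}$ to $\big((1,s_x),(1,s_y)\big)$ in the fibre product, hence induces a homomorphism $\overline{\rm Inn}(X\times Y)\to\overline{\rm Inn}(X)\times_{\mathbb{Z}}\overline{\rm Inn}(Y)$; it is surjective because the fibre product is generated by ${\rm Inn}_0(X)\times\{1\}$, $\{1\}\times{\rm Inn}_0(Y)$ and $\big((1,s_{x_0}),(1,s_{y_0})\big)$, and --- using that connectedness makes ${\rm Inn}_0(X)=\langle s_xs_{x'}^{-1}\rangle$ --- these are the images of $e_{(x,y_0)}e_{(x',y_0)}^{-1}$, $e_{(x_0,y)}e_{(x_0,y')}^{-1}$ and $e_{(x_0,y_0)}$. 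It is injective since $\overline{\rm Inn}(X\times Y)$ already embeds in $\mathbb{Z}\times{\rm Inn}(X)\times{\rm Inn}(Y)$, so it is an isomorphism. Under it $\overline{\rm Inn}(X\times Y)\cong\big({\rm Inn}_0(X)\times{\rm Inn}_0(Y)\big)\rtimes\mathbb{Z}$ with $1\in\mathbb{Z}$ acting by $(f,g)\mapsto(s_{x_0}fs_{x_0}^{-1},s_{y_0}gs_{y_0}^{-1})$, which is $\sigma^{-1}$; since $(1-\sigma^{-1})M=(1-\sigma)M$ this is immaterial for coinvariants.

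Step 2: Put $N={\rm Inn}_0(X)\times{\rm Inn}_0(Y)$. The Wang exact sequence of $1\to N\to\overline{\rm Inn}(X\times Y)\to\mathbb{Z}\to1$ gives $0\to H_2(N)_\sigma\to H_2(\overline{\rm Inn}(X\times Y))\to H_1(N)^\sigma\to0$, while the group-homology K\"unneth formula gives (all $H_0$ being $\mathbb{Z}$, so the $\operatorname{Tor}$ terms vanish, and $H_1(G)=G_{\rm ab}$)
\[ H_2(N)\cong H_2({\rm Inn}_0(X))\oplus\big({\rm Inn}_0(X)_{\rm ab}\otimes{\rm Inn}_0(Y)_{\rm ab}\big)\oplus H_2({\rm Inn}_0(Y)),\qquad H_1(N)\cong{\rm Inn}_0(X)_{\rm ab}\oplus{\rm Inn}_0(Y)_{\rm ab}, \]
$\sigma$-equivariantly and compatibly with the direct sums, $\sigma$ acting diagonally on the middle summand. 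Comparing with the Wang sequences of $\overline{\rm Inn}(X)={\rm Inn}_0(X)\rtimes\mathbb{Z}$ and of $\overline{\rm Inn}(Y)$, via the projections $X\times Y\to X,\ Y$ and the quandle sections $X\to X\times Y$, $x\mapsto(x,y_0)$ (and $y\mapsto(x_0,y)$): a short diagram chase shows the kernel of the two projections on $H_2$ is exactly the cross term $\big({\rm Inn}_0(X)_{\rm ab}\otimes{\rm Inn}_0(Y)_{\rm ab}\big)_\sigma$, and the two sections provide a splitting --- the ``mixed'' composites $\overline{\rm Inn}(X)\to\overline{\rm Inn}(X\times Y)\to\overline{\rm Inn}(Y)$ (and vice versa) factor through $\mathbb{Z}$, which has $H_2(\mathbb{Z})=0$. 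Hence
\[ H_2(\overline{\rm Inn}(X\times Y))\cong H_2(\overline{\rm Inn}(X))\oplus H_2(\overline{\rm Inn}(Y))\oplus\big({\rm Inn}_0(X)_{\rm ab}\otimes{\rm Inn}_0(Y)_{\rm ab}\big)_\sigma . \]

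The main obstacle is the transport of this isomorphism across Theorem~\ref{main-thm}: that theorem presents $H_2^Q(Z)$ for connected $Z$ only as a quotient of $H_2(\overline{\rm Inn}(Z))$, so one must verify that the kernel of $H_2(\overline{\rm Inn}(Z))\twoheadrightarrow H_2^Q(Z)$ is natural enough under the projections and sections above to be ``multiplicative'' --- i.e.\ matches its $X$- and $Y$-parts and avoids the cross summand --- and that the torsion-free ``orbit'' correction terms that appear for disconnected quandles (compare the $\mathbb{Z}^{\oplus(|M_T|-1)}$ in Theorem~\ref{alex-thm0}) genuinely vanish here by connectedness. This bookkeeping, rather than the group-homology computation of Step~2 (which is essentially formal once Step~1 is in place), is where the real care is needed; once it is carried out the stated formula follows.
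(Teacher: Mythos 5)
Your Steps 1 and 2 reproduce, in essentially the same way as the paper, the computation of $H_2(\overline{\rm Inn}(X\times Y))$: the identification ${\rm Inn}_0(X\times Y)\cong{\rm Inn}_0(X)\times{\rm Inn}_0(Y)$ (your fibre-product description is an equivalent packaging of the paper's map $\eta$), followed by the exact sequence (\ref{ss-ses}), the K\"unneth theorem and a five-lemma/diagram-chase argument. Up to that point the proposal is sound. The genuine gap is in your final paragraph, where you misidentify what separates $H_2(\overline{\rm Inn}(Z))$ from $H_2^Q(Z)$. Theorem \ref{main-thm} does \emph{not} present $H_2^Q(Z)$ as a quotient of $H_2(\overline{\rm Inn}(Z))$; it presents $A_Z={\rm Ker}(\alpha\times\psi)$ that way. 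By Theorem \ref{eis-thm}, $H_2^Q(Z)=(\tilde I_Z)_{\rm ab}$, where $\tilde I_Z={\rm Stab}_{{\rm As}(Z)}(z_0)\cap{\rm Ker}\,\epsilon$ is a \emph{central extension} of the point stabilizer $I_Z={\rm Stab}_{{\rm Inn}_0(Z)}(z_0)$ by $A_Z$. This $I_Z$ has nothing to do with disconnectedness: it is nontrivial for most connected quandles, and vanishes only when ${\rm Inn}_0(Z)$ acts freely (which is why the $\mathbb{Z}^{\oplus(|M_T|-1)}$ term you point to disappears for connected Alexander quandles). So the step you defer as ``bookkeeping'' is in fact the core of the proof.

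Concretely, two things are missing. First, the computation of $A_{X\times Y}$ itself: one must check that the Pontryagin-product subgroup being quotiented out, $\langle\bar s_{(x_0,y_0)},I_{X\times Y}\rangle$, decomposes under your isomorphism as $\langle\bar s_{x_0},I_X\rangle+\langle\bar s_{y_0},I_Y\rangle$, so that nothing is killed in the cross summand $B=({\rm Inn}_0(X)_{\rm ab}\otimes{\rm Inn}_0(Y)_{\rm ab})_\sigma$; this uses $I_{X\times Y}\cong I_X\times I_Y$ and property (P2). Second, and crucially, one must show that the central extension $A_{X\times Y}\hookrightarrow\tilde I_{X\times Y}\twoheadrightarrow I_X\times I_Y$ is trivial on the cross term, i.e.\ that the component $f_B\colon H_1(I_X)\otimes H_1(I_Y)\to B$ of the restricted extension class vanishes. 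The paper proves this by using connectedness of $X$ to see that $1-\sigma$ is surjective on ${\rm Inn}_0(X)_{\rm ab}$ while $\sigma$ fixes $H_1(I_Y)$ (since $\bar s_{y_0}$ commutes with $I_Y$), so every $g_X\otimes g_Y$ lies in the image of $1-\sigma$ and hence dies in $B$. Without this, $(\tilde I_{X\times Y})_{\rm ab}$ need not split as $H_2^Q(X)\oplus H_2^Q(Y)\oplus B$, and the stated formula does not follow from your plan.
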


\par This paper is organized as follows. In Section \ref{pre-sec}, we review basic terms on quandles. In Section \ref{main-sec}, we state and prove Theorem \ref{main-thm}, which is the main theorem of the paper. After that, Section \ref{ap-sec} exposes applications of this theorem: We compute ${\rm Ker}(\alpha \times \psi)$ to obtain formulae for the second homology of quandles; proofs of the three theorems above are given in this section.
\par In this paper, we denote the homology group $H_*(G,\mathbb{Z})$ (resp. $H^Q_*(X,\mathbb{Z})$) of a group $G$ (resp. a quandle $X$) with trivial coefficient group $\mathbb{Z}$ by $H_*(G)$ (resp. $H^Q_*(X)$) for short. For example, $H_1(G)$ is isomorphic to the abelianization $G_{\rm ab}$ of $G$. Also, given a right action of a group $G$ on a set $S$, we denote the isotropy subgroup $\{ g \in G \mid s \cdot g = s \}$ of $s \in S$ by ${\rm Stab}_G(s).$
\section{Preliminaries}\label{pre-sec}
In this section, we review basic definitions on quandles. For further information, refer to \cite{el-ne,nos1}, for example. We also recall Pontryagin products of groups. In this paper we do not recall definitions and basic properties on group (co)homology in detail; see, e.g., \cite{bro} if needed.
\par Let $X$ be a set and $*$ a binary operation on $X$. Let us consider the following three conditions on the pair $(X,*)$:
\begin{itemize}
\item[(Q1)] We have $x * x = x$ for any $x \in X$.
\item[(Q2)] The map $s_x: X \to X$ defined by $s_x(a) = a * x$ is bijective for any $x \in X$.
\item[(Q3)] We have $(x * y) * z = (x*z) * (y*z)$ for any $x,y,z \in X.$
\end{itemize}
A pair $(X,*)$ satisfying (Q2) and (Q3) above is called a \textit{rack}. If a rack satisfies (Q1), it is called a \textit{quandle}.
\par Let $X$ be a rack. An \textit{automorphism} of $X$ is a bijection $f: X \to X$ satisfying $f(x * y) = f(x) * f(y)$ for any $x, y \in X$. By the conditions (Q2) and (Q3) in the definition above, the right-multiplication map $s_x$ is an automorphism. Then, we call the subgroup of the automorphism group generated by $s_x$ with all $x \in X$ the \textit{inner-automorphism group} and denote it by ${\rm Inn}(X)$. In this paper, the group multiplication in ${\rm Inn}(X)$ is given by the opposite composition; we have a right action of ${\rm Inn}(X)$ on $X$. We denote the set of the orbits by $\mathcal{O}_X$ and call each orbit a \textit{connected component}. If the action is transitive, $X$ is said to be (\textit{algebraically}) \textit{connected}.
\par The \textit{associated group} ${\rm As}(X)$ of a rack $X$ is defined by the group presentation
$$\langle e_x \text{ with }x \in X \mid e_x e_y = e_y e_{x*y}\text{ for } x, y \in X \rangle.$$
We can easily check that $a \cdot e_x = a * x$ defines a right action of ${\rm As}(X)$ on $X$, i.e., the correspondence $e_x \mapsto s_x$ defines a group homomorphism $\psi: {\rm As}(X) \to {\rm Inn}(X)$, which is surjective by the definition.
\par For a quandle $X$, the \textit{quandle homology groups} $H_*^Q(X)$ are defined. Although we compute $H_2^Q(X)$ for some quandles in Section \ref{ap-sec}, we do not need an explicit formulation and hence omit the definition; see, e.g., \cite{cjkls,el-ne,nos1} for details. Instead, we recall a result of Eisermann \cite{eis2}:

\begin{thm}[\cite{eis2}]\label{eis-thm}
Let $X$ be a quandle and $E \subset X$ a representative set of $\mathcal{O}_X$. Then, we have
$$H_2^Q(X) \cong \bigoplus_{x_0 \in E} ({\rm Stab}_{{\rm As}(X)} (x_0) \cap {\rm Ker}\,\epsilon)_{\rm ab},$$
where $\epsilon: {\rm As}(X) \to \mathbb{Z}$ is the group epimorphism that sends each generator $e_x \in {\rm As}(X)$ to $1 \in \mathbb{Z}$.
\end{thm}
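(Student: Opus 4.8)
The plan is to prove the isomorphism one connected component at a time, and, on each component, to identify the second homology of a natural subcomplex of the quandle chain complex with the abelianization of a subgroup of ${\rm As}(X)$ by an explicit holonomy construction. Write $G={\rm As}(X)$, let $\epsilon\colon G\to\mathbb{Z}$ be the epimorphism of the statement, put $N={\rm Ker}\,\epsilon$, and fix $x_0\in E$, lying in a component $O\in\mathcal{O}_X$. Axiom (Q1) gives $x_0\cdot e_{x_0}=x_0*x_0=x_0$, so $e_{x_0}\in{\rm Stab}_G(x_0)$ while $\epsilon(e_{x_0})=1$; hence $G=N\rtimes\langle e_{x_0}\rangle$ and ${\rm Stab}_G(x_0)=\big({\rm Stab}_G(x_0)\cap N\big)\rtimes\langle e_{x_0}\rangle$. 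Since the $\epsilon$-degree of any group element can be corrected by a power of $e_{x_0}$ without moving $x_0$, the subgroup $N$ already acts transitively on $O$, with stabilizer $\pi_{x_0}:={\rm Stab}_G(x_0)\cap N={\rm Stab}_{{\rm As}(X)}(x_0)\cap{\rm Ker}\,\epsilon$; thus $O\cong\pi_{x_0}\backslash N$ as $N$-sets, and the right-hand side of the theorem equals $\bigoplus_{x_0\in E}H_1(\pi_{x_0})$.

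Next I would observe that $C_*^Q(X)$ splits compatibly over $\mathcal{O}_X$: in the boundary $\partial_n$ the summand that deletes the first entry cancels with its conjugated twin (there is nothing to conjugate), and every other summand leaves in the first slot an entry $x_1$ or $x_1*x_j$, hence one in the same component as $x_1$; so $C_*^Q(X)=\bigoplus_{O\in\mathcal{O}_X}C_*^{(O)}$, where $C_n^{(O)}$ is generated by the non-degenerate $n$-tuples whose first entry lies in $O$, and $H_*^Q(X)=\bigoplus_O H_*(C_*^{(O)})$ (one checks $H_1(C_*^{(O)})\cong\mathbb{Z}$, as it should be). It therefore suffices to construct, for each $x_0\in E$, an isomorphism $H_2(C_*^{(O)})\cong H_1(\pi_{x_0})=(\pi_{x_0})_{\rm ab}$.

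Given $g\in\pi_{x_0}$ and a word $w=e_{x_1}^{\eta_1}\cdots e_{x_k}^{\eta_k}$ (each $\eta_j=\pm1$) representing $g$, set $p_j=x_0\cdot\big(e_{x_1}^{\eta_1}\cdots e_{x_j}^{\eta_j}\big)\in O$, so $p_0=p_k=x_0$, and put $\Psi(w)=\sum_{j=1}^k\delta_j\in C_2^{(O)}$ with $\delta_j=(p_{j-1},x_j)$ if $\eta_j=1$ and $\delta_j=-(p_j,x_j)$ if $\eta_j=-1$. A telescoping shows $\partial_2\delta_j=(p_{j-1})-(p_j)$, so $\partial_2\Psi(w)=(p_0)-(p_k)=0$ and $\Psi(w)$ is a $2$-cycle. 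The crux is that $[\Psi(w)]\in H_2(C_*^{(O)})$ depends only on the image of $g$ in $(\pi_{x_0})_{\rm ab}$: inserting $e_ye_y^{-1}$ leaves $\Psi(w)$ unchanged; replacing a subword $e_xe_y$ by $e_ye_{x*y}$ changes $\Psi(w)$ by $\partial_3(p_{j-1},x,y)$ — here axiom (Q3) makes the two partial paths reconverge and identifies the difference of the affected two-term pieces with $\partial_3(p_{j-1},x,y)=(p_{j-1},y)-(p_{j-1}*x,y)-(p_{j-1},x)+(p_{j-1}*y,x*y)$; and, the delicate step, multiplying $g$ by a commutator $[u,v]$ with $u,v\in\pi_{x_0}$ alters $\Psi(w)$ by a boundary. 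Conversely, decoding a $3$-chain $c$ with $\partial_3c=\Psi(w)$ as a formal sum of instances of the defining relations of $G$ forces $g\in[\pi_{x_0},\pi_{x_0}]$. Together with surjectivity — every $2$-cycle of $C_*^{(O)}$ can be carried, using $\partial_3$-boundaries and connecting paths (available since $N$ acts transitively on $O$), into the form $\Psi(w)$ — this gives $H_2(C_*^{(O)})\cong(\pi_{x_0})_{\rm ab}$, and summing over $E$ yields the theorem.

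The step I expect to be the main obstacle is precisely this well-definedness ("$[u,v]\mapsto$ boundary" and its converse): one is showing that the holonomy of a $2$-cycle, read as an element of ${\rm As}(X)$, is well defined modulo $[\pi_{x_0},\pi_{x_0}]$ and that nothing finer is detected, and keeping track of the base-points $p_j$ and of signs through the telescoping is where the care lies. This recovers Eisermann's theorem \cite{eis2}; for connected $X$ the conceptual way to organize this bookkeeping is his Galois theory of quandle coverings — the universal abelian covering of $X$ has deck group $(\pi_{x_0})_{\rm ab}$, and, $H_1^Q(X)$ being free, the classification of abelian coverings by $H^2_Q(X;A)\cong{\rm Hom}\big((\pi_{x_0})_{\rm ab},A\big)$ together with the universal coefficient theorem gives the conclusion. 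I would retreat to that framework should the direct chain-level verification become unwieldy.
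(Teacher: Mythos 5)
First, a point of comparison: the paper does not prove this statement at all --- Theorem \ref{eis-thm} is imported from Eisermann \cite{eis2} and used as a black box --- so there is no in-paper argument to measure your attempt against. Judged on its own terms, your skeleton is the right one and most of the individual computations check out: the splitting of $C^Q_*(X)$ by the orbit of the first coordinate is standard; $\partial_2\Psi(w)=0$; invariance of $\Psi$ under free reduction and under substitution of the relator $e_xe_ye_{x*y}^{-1}e_y^{-1}$ both reduce to $\partial_3(p,x,y)$ as you say; the passage from ${\rm Stab}_{{\rm As}(X)}(x_0)$ down to $\pi_{x_0}={\rm Stab}_{{\rm As}(X)}(x_0)\cap{\rm Ker}\,\epsilon$ costs only the degenerate cell $(x_0,x_0)=0$, which is exactly where the quandle (as opposed to rack) axiom enters; and surjectivity does follow, since a $2$-cycle supported over $O$ is an edge cycle on $O$ and hence a sum of closed walks conjugated back to $x_0$. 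Note, though, that the step you flag as ``delicate'' --- invariance under multiplication by commutators --- is actually free: $\Psi$ is additive under concatenation and $\Psi(u^{-1})=-\Psi(u)$, so it kills commutators on the nose, with no boundary term needed. The difficulty is not there.

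The genuine gap is injectivity. The sentence ``decoding a $3$-chain $c$ with $\partial_3c=\Psi(w)$ as a formal sum of instances of the defining relations of $G$ forces $g\in[\pi_{x_0},\pi_{x_0}]$'' asserts the hard half of the theorem rather than proving it: the identity $\partial_3c=\Psi(w)$ allows arbitrary signed cancellation among the cells $(p,x,y)$, and converting it into a factorization of $w$ modulo commutators is precisely the content one must establish (it is the analogue of the edge-path/Hopf-formula description of $\pi_1$ of a $2$-complex, and it must also track the degenerate cells, which is where ${\rm Ker}\,\epsilon$ versus the full stabilizer gets decided). The clean way to close it is to recognize $C^{(O)}_{*+1}$ as the cellular chain complex of the covering of the rack space of $X$ with fiber the ${\rm As}(X)$-set $O$: that covering has fundamental group ${\rm Stab}_{{\rm As}(X)}(x_0)$, every degenerate loop $(p,p)$ is conjugate to $e_{x_0}$, and $e_{x_0}$ is central in the stabilizer because $g^{-1}e_{x_0}g=e_{x_0\cdot g}$, so collapsing the degenerate cells quotients out exactly the $\langle e_{x_0}\rangle$ factor and leaves $H_1=(\pi_{x_0})_{\rm ab}$. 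Alternatively, your stated fallback --- Eisermann's classification of abelian extensions giving $H^2_Q(X;A)\cong{\rm Hom}(\pi_{x_0},A)$, combined with the universal coefficient theorem and the freeness of $H_1^Q(X)$ --- is essentially the published proof and the safer route; but as written, your proposal does not yet establish the injectivity.
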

\par Next, we shall recall the definition of Pontryagin products $\langle -,- \rangle$, following \cite{ed-li}. We fix a generator $[\mathbb{Z} \times \mathbb{Z}]$ of $H_2(\mathbb{Z} \times \mathbb{Z}) \cong \mathbb{Z}$. Let $G$ be a group. For a commutative pair of elements $g, h \in G$, we define the \textit{Pontryagin product} to be $\langle g, h \rangle = f_*[\mathbb{Z} \times \mathbb{Z}] \in H_2(G),$ where $f:\mathbb{Z} \times \mathbb{Z} \to G$ is the group homomorphism such that $f(1,0) = g$ and $f(0,1) = h$. The homology class $\langle g, h \rangle$ is represented by the inhomogeneous group cycle $[g | h] - [h | g]$. If $h = h_1 \cdots h_k$ for some $h_1, \dots, h_k \in G$, the cycle
$$\sum_{i=0}^{k-1} ([g_i | h_{i+1}] - [h_{i+1}| g_{i+1}])$$
also represents $\langle g, h \rangle$, where $g_i = (h_1 \cdots h_i)^{-1} g (h_1 \cdots h_i)$.
\par As basic properties of the Pontryagin products, we can easily show
\begin{itemize}
\item[(P1)] $\langle g,g \rangle = \langle g, 1_G \rangle = 0,$
\item[(P2)] $\langle g,hh' \rangle = \langle g,h \rangle + \langle g,h' \rangle$ if $[g,h] = [g,h'] = 1_G,$ and
\item[(P3)] $\langle g,h \rangle = \langle a^{-1}ga, a^{-1}ha \rangle$ if $[g,h] = 1_G$.
\end{itemize}
By (P1) and (P2), if a subgroup $H \subset G$ commutes with $g$, the set $\langle g, H \rangle = \{ \langle g, h \rangle \mid h \in H \}$ is a subgroup of $H_2(G).$ If $G$ is commutative, the Pontryagin product induces a group homomorphism $\Lambda^2 G \to H_2(G)$, which is known (see, e.g., \cite[Theorem V.6.4]{bro}) to be isomorphic.
\section{Associated groups of quandles}\label{main-sec}
Let $X$ be a rack and let $\mathbb{Z}^{\oplus \mathcal{O}_X}$ denote the free abelian group generated by the connected components. Let $e_{[x]} \in \mathbb{Z}^{\oplus \mathcal{O}_X}$ denote the generator corresponding to the orbit through $x \in X$. It is well known (e.g., see \cite[Lemma 2.27]{nos1}) that a group homomorphism $\alpha: {\rm As}(X) \to \mathbb{Z}^{\oplus \mathcal{O}_X}$ is defined by $\alpha(e_x) = e_{[x]}$ and induces an isomorphism ${\rm As}(X)_{\rm ab} \cong \mathbb{Z}^{\oplus \mathcal{O}_X}$.
\par As recalled in the previous section, the action of ${\rm As}(X)$ on $X$ defines a group epimorphism $\psi: {\rm As}(X) \to {\rm Inn}(X).$ Here we should remark that the kernel of $\psi$ is contained in the center of ${\rm As}(X)$ since we have $g^{-1} e_x g = e_{\psi(g)(x)}$ for any $x \in X$ and $g \in {\rm As}(X)$.
\par Let $\overline{\rm Inn}(X)$ denote the image of $\alpha \times \psi: {\rm As}(X) \to \mathbb{Z}^{\oplus\mathcal{O}_X} \times {\rm Inn}(X)$, which acts on $X$ via the second projection. By considering the obvious equation $\alpha = p_1 \circ (\alpha \times \psi)$, where $p_1: \overline{\rm Inn}(X) \to \mathbb{Z}^{\oplus \mathcal{O}_X}$ is the first projection, we find that $p_1$ induces an isomorphism $\overline{\rm Inn}(X)_{\rm ab} \cong \mathbb{Z}^{\oplus \mathcal{O}_X}$. In particular, $H_1(\overline{\rm Inn}(X))$ is free and hence we have $H^2(\overline{\rm Inn}(X), A) \cong {\rm Hom}(H_2(\overline{\rm Inn}(X)), A)$ for any abelian group $A$ by the universal coefficient theorem. Thus, the central extensions of $\overline{\rm Inn}(X)$ by $A$ are classified by ${\rm Hom}(H_2(\overline{\rm Inn}(X)), A)$.

\begin{thm}\label{main-thm}
Let $X$ be a rack and let $A$ be the kernel of $\alpha \times \psi: {\rm As}(X) \to \mathbb{Z}^{\oplus \mathcal{O}_X} \times {\rm Inn}(X)$, where $\alpha: {\rm As}(X) \to \mathbb{Z}^{\oplus \mathcal{O}_X}$ is the abelianization and $\psi: {\rm As}(X) \to {\rm Inn}(X)$ is the canonical epimorphism. Then, there exists an isomorphism
\begin{equation}\label{main-eq}
A \cong H_2(\overline{{\rm Inn}}(X)) \;\bigg/ \sum_{x\in X} \langle \bar{s}_x, {\rm Stab}_{\overline{\rm Inn}(X)}(x) \rangle,
\end{equation}
where $\overline{\rm Inn} (X)$ is the image of $\alpha \times \psi$, $\bar{s}_x = (\alpha \times \psi)(e_x)$, $\langle -,- \rangle$ expresses the Pontryagin product, and $\sum_{x\in X} \langle \bar{s}_x, {\rm Stab}_{\overline{\rm Inn}(X)}(x) \rangle$ is the subgroup of $H_2(\overline{\rm Inn}(X))$ generated by the subgroups $\langle \bar{s}_x, {\rm Stab}_{\overline{\rm Inn}(X)}(x) \rangle$. More precisely, the central extension
\begin{equation}\label{main-seq}
1 \to A \to {\rm As}(X) \to \overline{\rm Inn}(X) \to 1
\end{equation}
is equivalent to the extension associated with $\varphi \in {\rm Hom}(H_2(\overline{\rm Inn}(X)), A) \cong H^2(\overline{\rm Inn}(X),A),$ where we identify $A$ with the quotient of $H_2(\overline{\rm Inn}(X))$ by the isomorphism {\rm(\ref{main-eq})} and $\varphi: H_2(\overline{\rm Inn}(X)) \to A$ is the projection.
\end{thm}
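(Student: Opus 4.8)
The plan is to treat the extension (\ref{main-seq})---which is central, since $A \subset {\rm Ker}\,\psi$ and ${\rm Ker}\,\psi$ lies in the center of ${\rm As}(X)$ by the relation $g^{-1}e_xg = e_{\psi(g)(x)}$---as a fixed central extension of $\bar G := \overline{\rm Inn}(X)$, and to identify its classifying homomorphism. As $H_1(\bar G) \cong \mathbb{Z}^{\oplus\mathcal{O}_X}$ is free, the universal coefficient theorem identifies the classifying class of (\ref{main-seq}) with a homomorphism $\varphi : H_2(\bar G) \to A$, and the theorem amounts to showing that, under a suitable isomorphism $A \cong H_2(\bar G)/S$ with $S := \sum_{x \in X}\langle \bar s_x, {\rm Stab}_{\bar G}(x)\rangle$, the homomorphism $\varphi$ becomes the quotient map. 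Two elementary facts will be used repeatedly. First, if $g \in {\rm Stab}_{{\rm As}(X)}(x)$ then $g^{-1}e_xg = e_{x \cdot g} = e_x$, so $e_x$ commutes with ${\rm Stab}_{{\rm As}(X)}(x)$; since ${\rm Stab}_{\bar G}(x) = (\alpha \times \psi)({\rm Stab}_{{\rm As}(X)}(x))$, it follows that $\bar s_x$ commutes with ${\rm Stab}_{\bar G}(x)$, so the subgroups $\langle \bar s_x, {\rm Stab}_{\bar G}(x)\rangle$ are defined. Second, for any central extension $1 \to K \to \widetilde G \to \bar G \to 1$ with classifying homomorphism $\chi : H_2(\bar G) \to K$ and any commuting $\bar a, \bar b \in \bar G$ with lifts $a, b \in \widetilde G$, a short cocycle computation using the representative $[\bar a\,|\,\bar b] - [\bar b\,|\,\bar a]$ of $\langle \bar a, \bar b\rangle$ gives $\chi(\langle \bar a, \bar b\rangle) = [a, b] \in K$.

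From these facts, $\varphi(\langle \bar s_x, (\alpha\times\psi)(g)\rangle) = [e_x, g] = 1$ for every $g \in {\rm Stab}_{{\rm As}(X)}(x)$, so $\varphi$ vanishes on $S$ and factors as $\varphi = \bar\varphi \circ \pi$, where $\pi : H_2(\bar G) \to C := H_2(\bar G)/S$ is the projection and $\bar\varphi : C \to A$ is a homomorphism. Everything now reduces to showing $\bar\varphi$ is an isomorphism; then $A \cong C$, and $\varphi$ becomes $\pi$. Let $\bar G' \to \bar G$ be the central extension by $C$ classified by $\pi$ itself; by functoriality of central extensions, (\ref{main-seq}) is the pushout of $\bar G'$ along $\bar\varphi$, so there is a homomorphism $f : \bar G' \to {\rm As}(X)$ over $\bar G$ that restricts to $\bar\varphi$ on kernels.

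The crux is to construct a homomorphism $h : {\rm As}(X) \to \bar G'$ over $\bar G$ in the opposite direction. Fix a set $E$ of orbit representatives; for each $x_0 \in E$ choose a lift $\tilde e_{x_0} \in \bar G'$ of $\bar s_{x_0}$, and for a general point $x = x_0 \cdot \bar g$ (with $\bar g \in \bar G$) put $\tilde e_x := \hat g^{-1}\tilde e_{x_0}\hat g$ for any lift $\hat g \in \bar G'$ of $\bar g$. This is independent of $\hat g$, and independent of the choice of $\bar g$: replacing $\bar g$ by $s\bar g$ with $s \in {\rm Stab}_{\bar G}(x_0)$ changes $\tilde e_x$ by the commutator of lifts of the commuting pair $\bar s_{x_0}, s$, which by the second fact above equals $\pi(\langle \bar s_{x_0}, s\rangle) \in \pi(S) = 0$; this is exactly the point where the quotient by $S$ is forced. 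One then checks the defining relations of ${\rm As}(X)$: writing $x = x_0 \cdot \bar g$ and $y = y_0 \cdot \bar h$, one has $x * y = x_0 \cdot (\bar g\bar h^{-1}\bar s_{y_0}\bar h)$, and evaluating $\tilde e_{x*y}$ by means of the lift $\hat g\hat h^{-1}\tilde e_{y_0}\hat h$ and the definitions of $\tilde e_x, \tilde e_y$ yields $\tilde e_{x*y} = \tilde e_y^{-1}\tilde e_x\tilde e_y$, i.e. $\tilde e_x\tilde e_y = \tilde e_y\tilde e_{x*y}$. Hence $e_x \mapsto \tilde e_x$ extends to the desired homomorphism $h$, which is over $\bar G$ by construction.

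Finally, $f \circ h$ is an endomorphism of ${\rm As}(X)$ over $\bar G$, hence differs from the identity by a homomorphism ${\rm As}(X) \to A$ factoring through ${\rm As}(X)_{\rm ab} \cong \mathbb{Z}^{\oplus\mathcal{O}_X}$; since $A \subset {\rm Ker}\,\alpha = [{\rm As}(X), {\rm As}(X)]$, this homomorphism vanishes on $A$, so $\bar\varphi \circ (h|_A) = {\rm id}_A$. Symmetrically, $h \circ f$ is an endomorphism of $\bar G'$ over $\bar G$; since $\pi$ is surjective, the five-term exact sequence of $\bar G'$ gives $C \subset [\bar G', \bar G']$, so the corresponding correction vanishes on $C$ and $(h|_A) \circ \bar\varphi = {\rm id}_C$. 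Therefore $\bar\varphi$ is an isomorphism, $A \cong C = H_2(\overline{\rm Inn}(X))/S$, and under this identification $\varphi$ is the projection. I expect the construction of $h$---especially proving that the $\tilde e_x$ are well defined (which is exactly where $S$ enters) and that they satisfy the relations of ${\rm As}(X)$---to be the main obstacle; the remaining steps are formal facts about central extensions and the five-term sequence.
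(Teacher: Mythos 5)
Your proposal is correct and follows the same overall architecture as the paper's proof: both compare ${\rm As}(X)$ with the central extension $\bar G'$ (the paper's $G'$) classified by the projection onto $H_2(\overline{\rm Inn}(X))/S$, construct homomorphisms in both directions over $\overline{\rm Inn}(X)$ (your $f$ and $h$ are the paper's $p$ and $q$, with the same orbit-representative construction and the same well-definedness check via $\pi(S)=0$), and conclude that the induced maps on kernels are mutually inverse. You differ in two sub-arguments, both to your advantage. First, in place of the paper's Lemma \ref{pp-lem} --- an explicit cocycle computation obtained by writing a stabilizing element as a word $\bar s_{y_1}^{\epsilon_1}\cdots\bar s_{y_k}^{\epsilon_k}$ --- you invoke the general commutator-pairing identity $\chi(\langle\bar a,\bar b\rangle)=[a,b]$ together with the observation that any $g\in{\rm Stab}_{{\rm As}(X)}(x)$ is a lift of its image in ${\rm Stab}_{\overline{\rm Inn}(X)}(x)$ and literally commutes with $e_x$ (since $g^{-1}e_xg=e_{x\cdot g}=e_x$); this shortcuts the computation entirely, and the same identity also cleanly handles the well-definedness of $h$. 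Second, where the paper verifies $q\circ p={\rm id}_{G'}$ on the generators $q(e_x)$ and then needs Lemma \ref{g'-lem} to see that these generate $G'$, you use twice the formal fact that two homomorphisms over $\overline{\rm Inn}(X)$ into a central extension differ by a homomorphism into the central kernel factoring through the abelianization, killed respectively by $A\subset{\rm Ker}\,\alpha=[{\rm As}(X),{\rm As}(X)]$ and by $C\subset[\bar G',\bar G']$ (the latter from the five-term sequence and surjectivity of $\pi$). This yields the two-sided inverse $h|_A$ of $\bar\varphi$ directly and dispenses with the generation lemma; it also tolerates your slightly more general choice of lifts $\tilde e_{x_0}$ (any lift of $\bar s_{x_0}$ rather than a $p$-preimage of $e_{x_0}$). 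Both routes are valid; yours is a little more economical, the paper's a little more explicit.
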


\subsection{Proof of Theorem \ref{main-thm}}
In this section, we give a proof of Theorem \ref{main-thm}.
\par By the 5-term exact sequence of the central extension (\ref{main-seq}), we find that
$$H_2(\overline{\rm Inn}(X)) \to A \to {\rm As}(X)_{\rm ab} \to \overline{\rm Inn}(X)_{\rm ab} \to 0$$
is exact. By \cite[Theorem 4]{hil}, the leftmost map is equal to the evaluation by the cohomology class $\varphi \in H^2(\overline{\rm Inn}(X),A)$ ($\cong {\rm Hom}(H_2(\overline{\rm Inn}(X)),A)$) associated with the extension.
Since ${\rm As}(X)_{\rm ab} \cong \overline{\rm Inn}(X)_{\rm ab} \cong \mathbb{Z}^{\oplus \mathcal{O}_X}$, this map is surjective. Thus, we may regard $A$ as a quotient of $H_2(\overline{\rm Inn}(X))$ and $\varphi: H_2(\overline{\rm Inn}(X)) \to A$ as the projection.
\par Let $A'$ be the right-hand side of (\ref{main-eq}) and consider the central extension
\begin{equation}\label{G'-seq}
1 \to A' \to G' \to \overline{\rm Inn}(X) \to 1
\end{equation}
associated with the cohomology class corresponding to the projection $\varphi': H_2(\overline{\rm Inn}(X)) \to A'$. First, we shall construct a group homomorphism $G' \to {\rm As}(X)$. Denote $H_2(\overline{\rm Inn}(X))$ by $\tilde{A}$ and let $\tilde{G}$ be the central extension of $\overline{\rm Inn}(X)$ associated with ${\rm id}_{\tilde{A}} \in H^2(\overline{\rm Inn}(X), \tilde{A})$; we have a short exact sequence
$$1 \to \tilde{A} \to \tilde{G} \to \overline{\rm Inn}(X) \to 1.$$
Since $\varphi: \tilde{A} \to A$ induces a homomorphism from $H^2(\overline{\rm Inn}(X),\tilde{A})$ to $H^2(\overline{\rm Inn}(X),A)$ that takes ${\rm id}_{\tilde{A}}$ to $\varphi$, there exists a group epimorphism $\tilde{p}: \tilde{G} \to {\rm As}(X)$.
\par To see that $\tilde{p}$ induces a homomorphism $p: G' \to {\rm As}(X)$, we prepare a lemma:

\begin{lem}\label{pp-lem}
Let $x \in X$ be an element and $f \in \overline{\rm Inn}(X)$ a stabilizer of $x$. Then, we have $\varphi(\langle \bar{s}_x, f \rangle) = 1_A$.
\end{lem}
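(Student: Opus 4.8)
The plan is to evaluate the map $\varphi$ on the Pontryagin product $\langle \bar{s}_x, f\rangle$ by passing to the distinguished lift $e_x$ of $\bar{s}_x$ in ${\rm As}(X)$ and exploiting the conjugation relation $g^{-1}e_x g = e_{\psi(g)(x)}$.

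First I would record, for an arbitrary central extension $1 \to A \to G \to Q \to 1$, the standard description of its transgression (five-term) map $H_2(Q) \to A$ on Pontryagin classes: if $\bar g, \bar h \in Q$ commute and $g, h \in G$ are any lifts, then this map carries $\langle \bar g, \bar h\rangle$ to the commutator $[g,h] \in A$, which lies in $A$ and is independent of the chosen lifts because $A$ is central and $[\bar g, \bar h] = 1_Q$. One can justify this either by pulling the extension back along the homomorphism $\mathbb{Z}\times\mathbb{Z} \to Q$ sending the two generators to $\bar g$ and $\bar h$ and invoking naturality of transgression together with the classical computation for central extensions of $\mathbb{Z}\times\mathbb{Z}$, or directly from a $2$-cocycle representing the extension applied to the inhomogeneous cycle $[\bar g\,|\,\bar h] - [\bar h\,|\,\bar g]$ representing $\langle \bar g,\bar h\rangle$. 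Since $\varphi$ is precisely this transgression map for the extension (\ref{main-seq}) by \cite[Theorem 4]{hil}, and since $\bar{s}_x = (\alpha\times\psi)(e_x)$, we obtain $\varphi(\langle \bar{s}_x, f\rangle) = [e_x, \tilde f]$, where $\tilde f \in {\rm As}(X)$ is any lift of $f$ along $\alpha\times\psi$.

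It then remains to see that $e_x$ and $\tilde f$ in fact commute already in ${\rm As}(X)$. Here I would apply the relation $g^{-1}e_x g = e_{\psi(g)(x)}$, valid for every $g \in {\rm As}(X)$ and recalled in Section \ref{main-sec}, to $g = \tilde f$: since $\overline{\rm Inn}(X)$ acts on $X$ through its second projection $\psi$ and $f \in {\rm Stab}_{\overline{\rm Inn}(X)}(x)$, the automorphism $\psi(\tilde f)$ fixes $x$, hence $\tilde f^{-1}e_x\tilde f = e_{\psi(\tilde f)(x)} = e_x$. Thus $[e_x, \tilde f] = 1_{{\rm As}(X)}$, so a fortiori $\varphi(\langle \bar{s}_x, f\rangle) = 1_A$, which is the assertion. (Pushing the same relation forward along $\alpha\times\psi$ shows, incidentally, that $\bar{s}_x$ commutes with $f$ in $\overline{\rm Inn}(X)$, so the Pontryagin product appearing in the statement is indeed defined.)

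I do not expect a real obstacle here. The only step that needs care is the first one — pinning down the commutator description of the transgression map on Pontryagin classes and reconciling the sign in the $2$-cocycle with the opposite-composition convention used for ${\rm Inn}(X)$ — after which the lemma collapses to the single identity $g^{-1}e_x g = e_{\psi(g)(x)}$.
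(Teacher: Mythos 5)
Your proof is correct, and the conclusion it reaches is the right one; the difference from the paper is one of packaging rather than substance. Both arguments ultimately reduce $\varphi(\langle \bar{s}_x, f \rangle)$ to the commutator $[e_x, \tilde{f}]$ in ${\rm As}(X)$ and kill it with the relation $g^{-1} e_x g = e_{\psi(g)(x)}$. You do this by invoking, as a black box, the general fact that for a central extension the five-term map $H_2(Q) \to A$ sends the Pontryagin class of a commuting pair to the commutator of arbitrary lifts; your sketch of why this holds (evaluate a defining $2$-cocycle on $[\bar{g}\,|\,\bar{h}] - [\bar{h}\,|\,\bar{g}]$ and compare with $\sigma(\bar{g})\sigma(\bar{h})\sigma(\bar{g})^{-1}\sigma(\bar{h})^{-1}$) is exactly right, and the sign/convention ambiguity you flag is harmless here since the commutator you land on is trivial. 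The paper instead proves this identity by hand in the case needed: it writes $f$ as a word $\bar{s}_{y_1}^{\epsilon_1} \cdots \bar{s}_{y_k}^{\epsilon_k}$, uses the explicit cycle $\sum_i c_i$ representing $\langle \bar{s}_x, f \rangle$ recalled in Section \ref{pre-sec}, chooses a section with $\sigma(\bar{s}_x) = e_x$, and telescopes to obtain $e_x = \varphi(\langle \bar{s}_x, f \rangle)^{-1} e_{x \cdot f}$. Your route is shorter and avoids the word decomposition of $f$ entirely; the paper's hands-on version has the side benefit that the identical telescoping computation is reused verbatim later (in checking that $q_0(e_{x_0})$ is independent of the choice of $f$, where the same identity is needed for the extension $G'$ rather than ${\rm As}(X)$), and your general lemma would serve equally well there.
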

\noindent Here, we use the multiplicative notation for $A$.
\begin{proof}
By the definition of $\overline{\rm Inn}(X)$, there exist elements $y_1, \dots, y_k \in X$ and signs $\epsilon_1, \dots, \epsilon_k \in \{ \pm 1\}$ such that $f = \bar{s}_{y_1}^{\epsilon_1} \cdots \bar{s}_{y_k}^{\epsilon_k}$. Let $x_i \in X$ denote $x \cdot \bar{s}_{y_1}^{\epsilon_1} \cdots \bar{s}_{y_i}^{\epsilon_i}$ for $i = 0, \dots, k$. Since $\bar{s}_{x_i} = (\bar{s}_{y_1}^{\epsilon_1} \cdots \bar{s}_{y_i}^{\epsilon_i})^{-1} \bar{s}_x (\bar{s}_{y_1}^{\epsilon_1} \cdots \bar{s}_{y_i}^{\epsilon_i})$, $\langle \bar{s}_x, f \rangle$ is represented by the inhomogeneous group cycle
$$\sum_{i = 0}^{k-1} c_i \in C_2(\overline{\rm Inn}(X)), \quad {\rm where} \quad c_i = [\bar{s}_{x_i}|\bar{s}_{y_{i+1}}^{\epsilon_{i+1}}] - [\bar{s}_{y_{i+1}}^{\epsilon_{i+1}} | \bar{s}_{x_{i+1}}].$$
\par We take a set-theoretic section $\sigma: \overline{\rm Inn}(X) \to {\rm As}(X)$ such that $\sigma(\bar{s}_{x}) = e_{x}$ and define a group cocycle $\phi \in C^2(\overline{\rm Inn}(X), A)$ representing $\varphi$ by $\sigma(g)\sigma(h) = \phi(g,h)\sigma(gh)$. Since $\bar{s}_{x_i}\bar{s}_{y_{i+1}}^{\epsilon_{i+1}} = \bar{s}_{y_{i+1}}^{\epsilon_{i+1}}\bar{s}_{x_{i+1}}$, we have
$$\phi(\bar{s}_{x_i},\bar{s}_{y_{i+1}}^{\epsilon_{i+1}})^{-1}\sigma(\bar{s}_{x_i})\sigma(\bar{s}_{y_{i+1}}^{\epsilon_{i+1}}) = \phi(\bar{s}_{y_{i+1}}^{\epsilon_{i+1}},\bar{s}_{x_{i+1}})^{-1}\sigma(\bar{s}_{y_{i+1}}^{\epsilon_{i+1}})\sigma(\bar{s}_{x_{i+1}})$$
and hence
$$\sigma(\bar{s}_{x_{i+1}}) = \phi(c_i)^{-1}\sigma(\bar{s}_{y_{i+1}}^{\epsilon_{i+1}})^{-1}\sigma(\bar{s}_{x_i})\sigma(\bar{s}_{y_{i+1}}^{\epsilon_{i+1}}) = \phi(c_i)^{-1}e_{y_{i+1}}^{-\epsilon_{i+1}}\sigma(\bar{s}_{x_i})e_{y_{i+1}}^{\epsilon_{i+1}}.$$
Therefore we have
\begin{align*}
e_x &= \sigma(\bar{s}_{x_k}) = \phi(c_0)^{-1} \cdots \phi(c_{k-1})^{-1} e_{y_k}^{-\epsilon_k} \cdots e_{y_1}^{-\epsilon_1} \sigma(\bar{s}_{x_0}) e_{y_1}^{\epsilon_1} \cdots e_{y_k}^{\epsilon_k}\\
&= \phi \left( \sum_{i = 0}^{k-1} c_i \right)^{-1} e_{y_k}^{-\epsilon_k} \cdots e_{y_1}^{-\epsilon_1} e_x e_{y_1}^{\epsilon_1} \cdots e_{y_k}^{\epsilon_k} = \varphi(\langle \bar{s}_x, f \rangle)^{-1} e_{x \cdot f},
\end{align*}
which implies $\varphi(\langle \bar{s}_x, f \rangle) = 1_A$ since $x \cdot f = x$.
\end{proof}

By Lemma \ref{pp-lem}, the projection $\varphi: \tilde{A} \to A$ factors through $A'$, i.e., there exists a group homomorphism $p_0: A' \to A$ such that $\varphi = p_0 \circ \varphi'$. Since $\varphi'$ and $\varphi$, regarded as cohomology classes, defines the group extensions (\ref{G'-seq}) and (\ref{main-seq}) respectively, there exists a surjective homomorphism $p:G' \to {\rm As}(X)$ such that the following diagram commutes:
$$\xymatrix{1 \ar[r] & A' \ar[r] \ar[d]^-{p_0} & G' \ar[r] \ar[d]^-{p} & \overline{\rm Inn}(X) \ar[r] \ar@{=}[d] & 1 \\
1 \ar[r] & A \ar[r] & {\rm As}(X) \ar[r] & \overline{\rm Inn}(X) \ar[r] & 1}.$$
\par In order to define the inverse $q: {\rm As}(X) \to G'$ of $p$, let $F(X)$ denote the free group generated by the symbols $e_x$ with $x \in X$ and define a group homomorphism $q_0: F(X) \to G'$ as follows. Let $E \subset X$ be a representative set of $\mathcal{O}_X$. For $x_0 \in E$, we arbitrarily take an element from $p^{-1}(e_{x_0}) \subset G'$ and let it be $q_0(e_{x_0})$. Next, for any $x \in X$, let $x_0 \in E$ be the representative of the orbit $[x] \in \mathcal{O}_X$ through $x$ and take any $f \in \overline{\rm Inn}(X)$ such that $x = x_0 \cdot f$; then we define $q_0(e_x) = \tilde{f}^{-1} q_0(e_{x_0}) \tilde{f}$, where $\tilde{f} \in G'$ is a lift of $f$.
\par Since $G' \to \overline{\rm Inn}(X)$ is a central extension, the definition of $q_0(e_x)$ does not depend on the choice of $\tilde{f}$. Moreover, we claim that it is independent of $f$. To see this, it is sufficient to check the independence in the case where $x = x_0$, i.e., we only have to show $q_0(e_{x_0}) = \tilde{f}^{-1} q_0(e_{x_0}) \tilde{f}$ for any $f \in {\rm Stab}_{\overline{\rm Inn}(X)}(x_0)$. Here, as in the proof of Lemma \ref{pp-lem}, we can verify that $\tilde{f}^{-1} q_0(e_{x_0}) \tilde{f} = \varphi'(\langle \bar{s}_{x_0}, f \rangle) q_0(e_{x_0})$. By the definition of $A'$, we find $\varphi'(\langle \bar{s}_{x_0}, f \rangle) = 1_{A'}$ and $\tilde{f}^{-1} q_0(e_{x_0}) \tilde{f} = q_0(e_{x_0})$, as claimed.
\par To see that $q_0$ induces a homomorphism $q: {\rm As}(X) \to G'$, we take $x,y \in X$ and express $x$ as $x_0 \cdot f$ with $x_0 \in E$ and $f \in \overline{\rm Inn}(X)$. Since $x* y = x_0\cdot f\bar{s}_y$ and $q_0(e_y)$ is a lift of $\bar{s}_y$, we have
$$q_0(e_{x*y}) = q_0(e_y)^{-1} \tilde{f}^{-1} q_0(e_{x_0}) \tilde{f} q_0(e_y) = q_0(e_y)^{-1} q_0(e_x) q_0(e_y),$$
where $\tilde{f}$ is a lift of $f$. Thus, the map $q_0$ canonically induces a homomorphism $q: {\rm As}(X) \to G'$.

\begin{proof}[Proof of Theorem \ref{main-thm}]
Let us check that $q \circ p$ is the identity, which verifies the injectivity of $p$ and hence completes the proof. For $x_0 \in E$, we have $q \circ p (q(e_{x_0})) = q(e_{x_0})$ by the definition. Moreover, if $q \circ p$ fixes an element $g \in G'$, it does any conjugate of $g$; this is because $q \circ p$ is compatible with the central extension $G' \to \overline{\rm Inn}(X)$. By the definition of $q$, for any $x \in X$, $q(e_x)$ is a conjugate of $q(e_{x_0})$ for some $x_0 \in E$, and hence is fixed by $q \circ p$. This implies that $q \circ p = {\rm id}_{G'}$ because the subgroup $G''$ of $G'$ generated by the elements $q(e_x)$ with $x \in X$ is equal to $G'$ by Lemma \ref{g'-lem} below.
\end{proof}

\begin{lem}\label{g'-lem}
Let $G'' \subset G'$ be a subgroup. If the composite $G'' \hookrightarrow G' \to \overline{\rm Inn}(X)$ is surjective, $G'' = G'$.
\end{lem}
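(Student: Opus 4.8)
The plan is to deduce the lemma from the single fact that $A'$ is contained in the commutator subgroup $[G',G']$; once this is known, the rest is a short manipulation with the central subgroup $A'$.

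First I would establish $A'\subseteq[G',G']$. Recall that $G'$ is, by construction, the central extension (\ref{G'-seq}) of $\overline{\rm Inn}(X)$ by $A'$ whose cohomology class corresponds, under $H^2(\overline{\rm Inn}(X),A')\cong{\rm Hom}(H_2(\overline{\rm Inn}(X)),A')$, to the projection $\varphi'$, which is surjective since $A'$ is a quotient of $H_2(\overline{\rm Inn}(X))$. The 5-term exact sequence of this central extension reads
\[ H_2(\overline{\rm Inn}(X)) \xrightarrow{\ \partial\ } A' \longrightarrow H_1(G') \longrightarrow H_1(\overline{\rm Inn}(X)) \longrightarrow 0, \]
and by \cite[Theorem 4]{hil} --- the same input used above for (\ref{main-seq}) --- the connecting map $\partial$ is the evaluation against the extension class, i.e.\ $\partial=\varphi'$. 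As $\varphi'$ is surjective, the map $A'\to H_1(G')=G'_{\rm ab}$ vanishes, which is exactly the assertion $A'\subseteq[G',G']$.

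Next I would finish as follows. Write $\pi\colon G'\to\overline{\rm Inn}(X)$ for the projection, so that $\ker\pi=A'$ lies in the center of $G'$. If $G''\subseteq G'$ maps onto $\overline{\rm Inn}(X)$, then $G''A'=\pi^{-1}(\pi(G''))=\pi^{-1}(\overline{\rm Inn}(X))=G'$, the subset $G''A'$ being a subgroup because $A'$ is central. Since $A'$ is central, every commutator of two elements of $G''A'$ is already a commutator of two elements of $G''$, so $[G',G']=[G''A',G''A']=[G'',G'']\subseteq G''$. Combined with $A'\subseteq[G',G']$ this yields $A'\subseteq G''$, and therefore $G'=G''A'=G''$.

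I do not expect a genuine obstacle here: the argument is purely formal except for the identification of the connecting homomorphism in the 5-term sequence with the extension class, and that is precisely the result of Hilton already invoked in the proof of Theorem \ref{main-thm}.
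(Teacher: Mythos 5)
Your proof is correct, but it takes a genuinely different route from the paper's. The paper argues directly with extension classes: since $\pi(G'')=\overline{\rm Inn}(X)$, the subgroup $G''$ is itself a central extension of $\overline{\rm Inn}(X)$ by $A''=G''\cap A'$, and by functoriality the inclusion $A''\hookrightarrow A'$ pushes the class $\varphi''$ of $G''\to\overline{\rm Inn}(X)$ forward to $\varphi'$; viewing classes as homomorphisms out of $H_2(\overline{\rm Inn}(X))$, this says $\varphi'$ factors through $A''$, and surjectivity of $\varphi'$ forces $A''=A'$, hence $G''=G'$. You instead extract from the surjectivity of $\varphi'$ (via the 5-term sequence and the same result of \cite{hil} already used for (\ref{main-seq})) the single fact $A'\subseteq[G',G']$, and then finish by elementary group theory: $G''A'=G'$, centrality gives $[G',G']=[G'',G'']\subseteq G''$, so $A'\subseteq G''$ and $G''=G'$. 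Both arguments hinge on the surjectivity of $\varphi'$; the paper's is shorter once one accepts functoriality of extension classes under change of the central subgroup, while yours isolates a reusable structural statement (the central kernel of a surjective-class extension lies in the commutator subgroup) and reduces the lemma to a purely formal manipulation, at the cost of invoking the 5-term sequence a second time. No gaps.
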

\begin{proof}
The assumption of the surjectivity implies that $G''$ is a central extension of $\overline{\rm Inn}(X)$ by a subgroup $A''$ of $A'$. Let $\varphi'' \in H^2(\overline{\rm Inn}(X), A'')$ be the cohomology class associated with the central extension $G'' \to \overline{\rm Inn}(X)$. By the functoriality of the extension classes, the homomorphism induced by the inclusion map $A'' \to A'$ between the coefficient groups sends $\varphi'' \in H^2(\overline{\rm Inn}(X), A'')$ to $\varphi' \in H^2(\overline{\rm Inn}(X), A')$, which implies that $A'' = A'$ since the evaluation by $\varphi'$ is surjective. Thus, we have $G'' = G'$, as required.
\end{proof}

\subsection{Preparation for applications}
Before proceeding, we prepare notations and basic tools that are helpful in applying Theorem \ref{main-thm}.
\par Let $X$ be a quandle and define $\psi: {\rm As}(X) \to {\rm Inn}(X)$, $\alpha: {\rm As}(X) \to \mathbb{Z}^{\oplus \mathcal{O}_X}$, $\overline{\rm Inn}(X) \subset \mathbb{Z}^{\oplus\mathcal{O}_X} \times {\rm Inn}(X)$, and $\bar{s}_x \in \overline{\rm Inn}(X)$ for $x \in X$ as in Theorem \ref{main-thm}. Let $\epsilon_0: \mathbb{Z}^{\oplus \mathcal{O}_X} \to \mathbb{Z}$ be the group homomorphism that sends $e_{[x]} \in \mathbb{Z}^{\oplus \mathcal{O}_X}$ to $1 \in \mathbb{Z}$, and let $\bar{\epsilon}: \overline{\rm Inn}(X) \to \mathbb{Z}$ be the composite of the first projection and $\epsilon_0$. Then, we define $\overline{\rm Inn}_0(X)$ to be ${\rm Ker}\,\bar{\epsilon}$ and let ${\rm Inn}_0(X) \subset {\rm Inn}(X)$ denote $p_2(\overline{\rm Inn}_0(X))$, where $p_2: \mathbb{Z}^{\oplus\mathcal{O}_X} \times {\rm Inn}(X) \to {\rm Inn}(X)$ is the second projection. If $X$ is connected, we can identify $\overline{\rm Inn}_0(X)$ with ${\rm Inn}_0(X)$ since $\overline{\rm Inn}_0(X) = \{0\} \times {\rm Inn}_0(X)$.
\par Since $\bar{\epsilon}$ is surjective, we have a group extension
$$1 \to \overline{\rm Inn}_0(X) \to \overline{\rm Inn}(X) \to \mathbb{Z} \to 1.$$
The Lyndon-Hochschild-Serre spectral sequence for this extension yields short exact sequences
\begin{equation}\label{ss-ses}
0 \to H_n(\overline{\rm Inn}_0(X))_\sigma \to H_n(\overline{\rm Inn}(X)) \to H_{n-1}(\overline{\rm Inn}_0(X))^\sigma \to 0
\end{equation}
for $n \geq 0$, where $\sigma$ is the conjugation by $\bar{s}_{x_0}$ for any fixed point $x_0 \in X$. In particular, the sequence (\ref{ss-ses}) in the case $n = 2$ is useful in the calculation of $A$ in Theorem \ref{main-thm}.
\par Also, the sum of Pontryagin products in the equation (\ref{main-eq}) can be simplified:

\begin{lem}\label{simp-lem}
Let $X$ be a quandle and take any representative set $E \subset X$ of $\mathcal{O}_X$. Then we have
$$\sum_{x\in X} \langle \bar{s}_x, {\rm Stab}_{\overline{\rm Inn}(X)}(x) \rangle = \sum_{x_0 \in E} \langle \bar{s}_{x_0}, {\rm Stab}_{\overline{\rm Inn}_0(X)}(x_0) \rangle.$$
\end{lem}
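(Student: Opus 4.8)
The plan is to prove the two displayed subgroups of $H_2(\overline{\rm Inn}(X))$ coincide by showing each is contained in the other. For the inclusion $\supseteq$, note that $E \subset X$ and ${\rm Stab}_{\overline{\rm Inn}_0(X)}(x_0) \subset {\rm Stab}_{\overline{\rm Inn}(X)}(x_0)$, so each summand on the right is literally one of the summands (a subgroup of one) on the left; this direction is immediate. The work is all in the inclusion $\subseteq$: I must show that for \emph{every} $x \in X$, the group $\langle \bar{s}_x, {\rm Stab}_{\overline{\rm Inn}(X)}(x) \rangle$ lies in the right-hand side.

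First I would reduce from an arbitrary $x$ to a representative $x_0 \in E$. Write $x = x_0 \cdot f$ for some $f \in \overline{\rm Inn}(X)$ with $x_0 \in E$; then $\bar{s}_x = f^{-1}\bar{s}_{x_0}f$ and ${\rm Stab}_{\overline{\rm Inn}(X)}(x) = f^{-1}{\rm Stab}_{\overline{\rm Inn}(X)}(x_0)f$, so by property (P3) of the Pontryagin product we get $\langle \bar{s}_x, h\rangle = \langle \bar{s}_{x_0}, fhf^{-1}\rangle$ for any $h$ commuting with $\bar{s}_x$. Hence $\langle \bar{s}_x, {\rm Stab}_{\overline{\rm Inn}(X)}(x)\rangle = \langle \bar{s}_{x_0}, {\rm Stab}_{\overline{\rm Inn}(X)}(x_0)\rangle$, and it remains to prove, for each $x_0 \in E$, that $\langle \bar{s}_{x_0}, {\rm Stab}_{\overline{\rm Inn}(X)}(x_0)\rangle \subseteq \sum_{x_0' \in E}\langle \bar{s}_{x_0'}, {\rm Stab}_{\overline{\rm Inn}_0(X)}(x_0')\rangle$.

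The main obstacle is the difference between the full stabilizer ${\rm Stab}_{\overline{\rm Inn}(X)}(x_0)$ and its intersection with $\overline{\rm Inn}_0(X)$. The key structural observation is that $\bar{s}_{x_0}$ itself fixes $x_0$ (quandle axiom (Q1) gives $x_0 * x_0 = x_0$, hence $x_0 \cdot \bar{s}_{x_0} = x_0$), so $\bar{s}_{x_0} \in {\rm Stab}_{\overline{\rm Inn}(X)}(x_0)$, and moreover $\bar\epsilon(\bar{s}_{x_0}) = 1$. Given any $h \in {\rm Stab}_{\overline{\rm Inn}(X)}(x_0)$, set $n = \bar\epsilon(h) \in \mathbb{Z}$ and $h' = \bar{s}_{x_0}^{-n}h$; then $h' \in {\rm Stab}_{\overline{\rm Inn}_0(X)}(x_0)$ since $\bar{s}_{x_0}^{-n}$ fixes $x_0$ and $\bar\epsilon(h') = 0$. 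Now $h = \bar{s}_{x_0}^{n}h'$ with $\bar{s}_{x_0}^n$ and $h'$ both commuting with $\bar{s}_{x_0}$, so property (P2) gives $\langle \bar{s}_{x_0}, h\rangle = \langle \bar{s}_{x_0}, \bar{s}_{x_0}^n\rangle + \langle \bar{s}_{x_0}, h'\rangle = \langle \bar{s}_{x_0}, h'\rangle$, the first term vanishing by (P1). Thus every element of $\langle \bar{s}_{x_0}, {\rm Stab}_{\overline{\rm Inn}(X)}(x_0)\rangle$ already lies in $\langle \bar{s}_{x_0}, {\rm Stab}_{\overline{\rm Inn}_0(X)}(x_0)\rangle$, completing the inclusion $\subseteq$ and hence the proof.

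One small point I would verify carefully: that ${\rm Stab}_{\overline{\rm Inn}(X)}(x_0)$ is genuinely a subgroup commuting-wise compatible with (P2) and (P3) — that is, that $\bar{s}_{x_0}$ commutes with every element of ${\rm Stab}_{\overline{\rm Inn}(X)}(x_0)$. This is false in general: $\bar{s}_{x_0}$ need not commute with an arbitrary stabilizer of $x_0$. However, the notation $\langle \bar{s}_x, {\rm Stab}_{\overline{\rm Inn}(X)}(x)\rangle$ in Theorem~\ref{main-thm} is only meaningful because $\bar{s}_x$ \emph{does} centralize $\mathrm{Stab}(x)$ — indeed $g^{-1}\bar{s}_x g = \bar{s}_{x\cdot g} = \bar{s}_x$ for $g$ fixing $x$, using the relation $g^{-1}e_x g = e_{\psi(g)(x)}$ from Section~\ref{main-sec} pushed down to $\overline{\rm Inn}(X)$. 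So the Pontryagin products are all well-defined, the sets $\langle \bar{s}_x, \mathrm{Stab}(x)\rangle$ are subgroups by (P1)–(P2), and the argument above goes through verbatim.
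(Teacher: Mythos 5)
Your proof is correct and follows essentially the same route as the paper's: reduce the index set to $E$ via conjugation and (P3), then use the fact that ${\rm Stab}_{\overline{\rm Inn}(X)}(x_0)$ is generated by $\bar{s}_{x_0}$ and ${\rm Stab}_{\overline{\rm Inn}_0(X)}(x_0)$ together with (P1) and (P2). You merely supply details the paper leaves implicit (the $\bar\epsilon$-degree argument for the generation claim, the role of (Q1), and the commutativity of $\bar{s}_{x_0}$ with its stabilizer), all of which check out.
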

\begin{proof}
For $x \in X$ and $f \in \overline{\rm Inn}(X)$, $\bar{s}_{x\cdot f} = f^{-1} \bar{s}_x f$ and ${\rm Stab}_{\overline{\rm Inn}(X)}(x\cdot f) = f^{-1}{\rm Stab}_{\overline{\rm Inn}(X)}(x)f$. Therefore the property (P3) recalled in Section \ref{pre-sec} shows that the range of the sum can be replaced with the representatives $E$, instead of the whole set $X$. Furthermore, since ${\rm Stab}_{\overline{\rm Inn}(X)}(x)$ is generated by ${\rm Stab}_{\overline{\rm Inn}_0(X)}(x)$ and $\bar{s}_x$, the properties (P1) and (P2) lead to the required equation.
\end{proof}
\section{Applications}\label{ap-sec}
In this section, we apply Theorem \ref{main-thm} to specific quandles. We give formulae of the associated groups and the second quandle homology groups for Alexander quandles in Section \ref{alex-sec} and for connected generalized Alexander quandles in Section \ref{ga-sec}. We also consider the product of connected quandles in Section \ref{prod-sec}, and show a K$\ddot{\rm u}$nneth-like theorem for the second homology.
\subsection{Alexander quandles}\label{alex-sec}
Let $M$ be a $\mathbb{Z}[T^{\pm 1}]$-module; that is, $M$ is an abelian group and $T$ is an automorphism of $M$. The set $M$ with the binary operation $*$ given by $x * y = Tx + (1-T)y$ is a quandle and called the \textit{Alexander quandle} of the pair $(M,T)$. In this paper, we denote this quandle by $Q_{M,T}$.
\par It is known (\cite{li-ne}) that the connected component of an Alexander quandle $X = Q_{M,T}$ that contains an element $x$ of $X$ is equal to $x + (1-T)M$, and hence we can identify $\mathcal{O}_X$ with $M_T$; especially, $Q_{M,T}$ is connected if and only if $1-T$ is surjective.

\begin{thm}[Theorem \ref{alex-thm0}]\label{alex-thm}
Let $M$ be a $\mathbb{Z}[T^{\pm 1}]$-module and $X = Q_{M,T}$ the Alexander quandle. Then, the kernel $A$ in Theorem \ref{main-thm} is isomorphic to
$$(\Lambda^2 (1-T)M)_T \oplus ((1-T)M)^T,$$
where $T$ acts on the exterior product diagonally, and we have
$$H_2^Q(X) \cong \left( (\Lambda^2 (1-T)M)_T \oplus ((1-T)M)^T \oplus \mathbb{Z}^{\oplus (|M_T| -1)} \right)^{\oplus M_T}.$$
\end{thm}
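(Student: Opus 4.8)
The plan is to feed an explicit model for $\overline{\mathrm{Inn}}(X)$ into Theorem~\ref{main-thm} together with Lemma~\ref{simp-lem} to obtain $A$, and then to deduce the formula for $H_2^Q(X)$ from Eisermann's Theorem~\ref{eis-thm}. First I would set up the groups. The map $s_y$ is the affine transformation $a\mapsto Ta+(1-T)y$, so $\mathrm{Inn}(X)$ is the group of affine maps $a\mapsto T^k a+v$ with $v\in(1-T)M$, sitting in an extension $1\to(1-T)M\to\mathrm{Inn}(X)\to\langle T\rangle\to 1$ with cyclic quotient; recall also $\mathcal{O}_X\cong M_T$. Since $\bar\epsilon(\bar s_x)=1$, every element of $\overline{\mathrm{Inn}}_0(X)=\mathrm{Ker}\,\bar\epsilon$ acts on $M$ by a translation, so $\overline{\mathrm{Inn}}_0(X)$ is abelian and fits in an exact sequence
\[ 0\longrightarrow Z_0\longrightarrow\overline{\mathrm{Inn}}_0(X)\xrightarrow{p_2}(1-T)M\longrightarrow 0, \]
where $Z_0=\overline{\mathrm{Inn}}_0(X)\cap\mathrm{Ker}\,p_2$ is central in $\overline{\mathrm{Inn}}(X)$ (hence $\sigma$-fixed) and is free abelian of rank $|M_T|-1$, being the kernel of the map $\mathbb{Z}^{\oplus\mathcal{O}_X}\to\mathbb{Z}\oplus\bigl((1-T)M/(1-T)^2M\bigr)$ given by the augmentation and by $e_{[x]}\mapsto(1-T)x$. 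Because $\overline{\mathrm{Inn}}_0(X)$ acts by translations, $\mathrm{Stab}_{\overline{\mathrm{Inn}}_0(X)}(x_0)=Z_0$ for every $x_0$, so by Lemma~\ref{simp-lem} the subgroup divided out in~(\ref{main-eq}) is $P:=\sum_{x_0\in E}\langle\bar s_{x_0},Z_0\rangle$; and as $Z_0$ is central, $\langle\bar s_x,z\rangle\in P$ for all $x\in X$ and $z\in Z_0$.

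Next I would compute $H_2(\overline{\mathrm{Inn}}(X))$ from~(\ref{ss-ses}) with $n=2$: since $\overline{\mathrm{Inn}}_0(X)$ is abelian this is $0\to(\Lambda^2\overline{\mathrm{Inn}}_0(X))_\sigma\to H_2(\overline{\mathrm{Inn}}(X))\to(\overline{\mathrm{Inn}}_0(X))^\sigma\to 0$. Conjugation by $\bar s_{x_0}$ rescales translations by $T^{\pm1}$, so $(\overline{\mathrm{Inn}}_0(X))^\sigma=p_2^{-1}\bigl(((1-T)M)^T\bigr)$, whence $0\to Z_0\to(\overline{\mathrm{Inn}}_0(X))^\sigma\to((1-T)M)^T\to 0$. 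The edge map $H_2(\overline{\mathrm{Inn}}(X))\to(\overline{\mathrm{Inn}}_0(X))^\sigma$ sends $\langle\bar s_x,z\rangle$ to $z$ for $z\in(\overline{\mathrm{Inn}}_0(X))^\sigma$, so it carries $P$ onto $Z_0$; and since $\bar s_{x_0}\bar s_{x_1}^{-1}\in\overline{\mathrm{Inn}}_0(X)$ and these differences generate $\overline{\mathrm{Inn}}_0(X)$, the subgroup $P\cap(\Lambda^2\overline{\mathrm{Inn}}_0(X))_\sigma$ is the image of $\overline{\mathrm{Inn}}_0(X)\wedge Z_0$ (the subgroup of $\Lambda^2\overline{\mathrm{Inn}}_0(X)$ generated by $g\wedge z$, $g\in\overline{\mathrm{Inn}}_0(X)$, $z\in Z_0$). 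Therefore $A=H_2(\overline{\mathrm{Inn}}(X))/P$ surjects onto $(\overline{\mathrm{Inn}}_0(X))^\sigma/Z_0\cong((1-T)M)^T$ with kernel $(\Lambda^2\overline{\mathrm{Inn}}_0(X))_\sigma$ modulo the image of $\overline{\mathrm{Inn}}_0(X)\wedge Z_0$; since $\Lambda^2\overline{\mathrm{Inn}}_0(X)/(\overline{\mathrm{Inn}}_0(X)\wedge Z_0)\cong\Lambda^2(1-T)M$ as $\sigma$-modules and $(-)_\sigma$ is right exact, that kernel is $(\Lambda^2(1-T)M)_\sigma=(\Lambda^2(1-T)M)_T$. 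Finally I would split $0\to(\Lambda^2(1-T)M)_T\to A\to((1-T)M)^T\to 0$: for $v=(1-T)\mu$ with $\mu\in M$ the element $\bar s_0^{-1}\bar s_\mu$ lies in $(\overline{\mathrm{Inn}}_0(X))^\sigma$ and commutes with $\bar s_0$, and $v\mapsto\varphi(\langle\bar s_0,\bar s_0^{-1}\bar s_\mu\rangle)$, with $\varphi\colon H_2(\overline{\mathrm{Inn}}(X))\to A$ the projection of Theorem~\ref{main-thm}, is a well-defined section (two choices of $\mu$ differ by an element of $M^T$, altering $\bar s_0^{-1}\bar s_\mu$ by an element of $Z_0$, hence the Pontryagin product by an element of $P$). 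This gives $A\cong(\Lambda^2(1-T)M)_T\oplus((1-T)M)^T$.

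For $H_2^Q(X)$ I would use Theorem~\ref{eis-thm}: with $E$ a set of orbit representatives ($|E|=|M_T|$), $H_2^Q(X)\cong\bigoplus_{x_0\in E}(\mathrm{Stab}_{\mathrm{As}(X)}(x_0)\cap\mathrm{Ker}\,\epsilon)_{\mathrm{ab}}$. Since the $\mathrm{As}(X)$-action on $X$ factors through $\overline{\mathrm{Inn}}(X)$ and $A$ is central, $G_{x_0}:=\mathrm{Stab}_{\mathrm{As}(X)}(x_0)\cap\mathrm{Ker}\,\epsilon=(\alpha\times\psi)^{-1}(Z_0)$ is a central extension $1\to A\to G_{x_0}\to Z_0\to 1$ whose class is the pullback of that of~(\ref{main-seq}); its commutator pairing $Z_0\times Z_0\to A$ is $(z,z')\mapsto\varphi(\langle z,z'\rangle)$, and since $z\wedge z'\in\overline{\mathrm{Inn}}_0(X)\wedge Z_0$ the class $\langle z,z'\rangle$ lies in $P=\mathrm{Ker}\,\varphi$, so the pairing vanishes and $G_{x_0}$ is abelian. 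As $Z_0$ is free, $G_{x_0}\cong A\oplus Z_0\cong(\Lambda^2(1-T)M)_T\oplus((1-T)M)^T\oplus\mathbb{Z}^{\oplus(|M_T|-1)}$, and summing over the $|M_T|$ orbits gives the formula.

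The main obstacle is the second paragraph, namely the analysis of $(\Lambda^2\overline{\mathrm{Inn}}_0(X))_\sigma$ modulo the Pontryagin classes: the extension $0\to Z_0\to\overline{\mathrm{Inn}}_0(X)\to(1-T)M\to 0$ is \emph{not} split as a sequence of $\sigma$-modules — the translation part genuinely twists into the $\mathbb{Z}^{\oplus\mathcal{O}_X}$-part — so one must verify carefully that the classes $(\bar s_{x_0}\bar s_{x_1}^{-1})\wedge z$ with $z\in Z_0$ span exactly $\overline{\mathrm{Inn}}_0(X)\wedge Z_0$ inside $\Lambda^2\overline{\mathrm{Inn}}_0(X)$. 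This is what makes $(1-T)M$, rather than $(1-T)^2M$, appear in $A$, and it is also what forces $G_{x_0}$ to be abelian; a secondary, more routine point is the bookkeeping identifying $Z_0$ as free of rank $|M_T|-1$.
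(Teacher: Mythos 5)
Your proposal is correct and follows essentially the same route as the paper's proof: the Wang-type sequence (\ref{ss-ses}) split by the Pontryagin products $\langle \bar s_0, - \rangle$, Lemma \ref{simp-lem} together with the centrality of the translation-free stabilizer $Z_0 = I$, and Eisermann's theorem combined with the splitting of the central extension over the free abelian group $I$; your quotient-then-split ordering versus the paper's split-then-quotient is only a cosmetic difference. The two items you assert or defer --- that the edge map sends $\langle \bar s_0, z\rangle$ to $z$ (which the paper establishes via the auxiliary subgroup $H \subset \overline{\rm Inn}(X)$) and that ${\rm Ker}\,\bar\xi$ has rank exactly $|M_T|-1$ (which requires exhibiting the elements $e_x - \tfrac12 e_{2x} - \tfrac12 e_0$ rather than pure bookkeeping) --- are both true and readily fillable.
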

\noindent Here, $|M_T|$ denotes the cardinality of $M_T$ and we do not assume $|M_T| < \infty$. We should note that $|M_T| = |M_T| - 1$ if $M_T$ has infinite order.

\begin{proof}
In order to apply Theorem \ref{main-thm}, we first compute $H_2(\overline{\rm Inn}(X))$. Since the inner automorphism $s_x$ is an affine transformation $T \bullet + (1-T)x$, we find that if $a \in \mathbb{Z}^{\oplus M_T}$ and $f \in {\rm Inn}(X)$ satisfy $(a,f) \in \overline{\rm Inn}(X)$, then $f$ is an affine transformation $T^{\epsilon_0(a)}\bullet + y$ for some $y \in (1-T)M$; in particular, we can regard $\overline{\rm Inn}_0(X)$ as a subgroup of the abelian group $\epsilon_0^{-1}(0) \times (1-T)M$. Recalling that the Pontryagin product induces an isomorphism $\Lambda^2 G \to H_2(G)$ for any abelian group $G$, we have $H_2(\overline{\rm Inn}_0(X)) \cong \Lambda^2 \overline{\rm Inn}_0(X)$.
\par Taking $0 \in X$ as $x_0$ in (\ref{ss-ses}), we obtain an exact sequence
\begin{equation}\label{alex-seq}
0 \to H_2(\overline{\rm Inn}_0(X))_T \to H_2(\overline{\rm Inn}(X)) \to H_1(\overline{\rm Inn}_0(X))^T \to 0.
\end{equation}
Considering the commutative subgroup $H \subset \overline{\rm Inn}(X)$ generated by $\bar{s}_0$ and $\overline{\rm Inn}_0(X)^T$, we have a trivial extension $\overline{\rm Inn}_0(X)^T \hookrightarrow H \twoheadrightarrow \mathbb{Z}$ and an exact sequence
\begin{equation}\label{sp-seq}
0 \to H_2(\overline{\rm Inn}_0(X)^T) \to H_2(H) \to H_1(\overline{\rm Inn}_0(X)^T) \to 0.
\end{equation}
Since the injective homomorphism $H_2(\overline{\rm Inn}_0(X)^T) \to H_2(H)$ is induced by the inclusion, the epimorphism $r: H \to \overline{\rm Inn}_0(X)^T$ defined by $r(a, f) = \bar{s}_0^{-\epsilon_0(a)}(a, f)$ induces a splitting of (\ref{sp-seq}). Let $s: H_1(\overline{\rm Inn}_0(X)^T) \to H_2(H)$ be the section of this splitting. Since the exact sequences (\ref{sp-seq}) and (\ref{alex-seq}) are compatible with the inclusion homomorphisms, $\iota_* \circ s: H_1(\overline{\rm Inn}_0(X)^T) \to H_2(\overline{\rm Inn}(X))$ gives a splitting of (\ref{alex-seq}), where $\iota: H \to \overline{\rm Inn}(X)$ is the inclusion. Hence,
\begin{eqnarray}
H_2(\overline{\rm Inn}(X)) &\cong& H_2(\overline{\rm Inn}_0(X))_T \oplus H_1(\overline{\rm Inn}_0(X))^T \notag\\
&\cong& (\Lambda^2 \overline{\rm Inn}_0(X))_T \oplus \overline{\rm Inn}_0(X)^T. \label{alex-eq}
\end{eqnarray}
Here, we remark that the image of the Pontryagin product $\langle \bar{s}_0,- \rangle: \overline{\rm Inn}_0(X)^T \to H_2(H)$ is contained in ${\rm Ker}\,r_*$ since $r(\bar{s}_0)  = (0, {\rm id}_X)$. Moreover, $\langle \bar{s}_0,- \rangle$ gives an isomorphism $\overline{\rm Inn}_0(X)^T \cong {\rm Ker}\,r_*$ because
$$H_2(H) \cong \Lambda^2H = \Lambda^2 \overline{\rm Inn}_0(X)^T \oplus \langle \bar{s}_0 \rangle \otimes \overline{\rm Inn}_0(X)^T \cong H_2(\overline{\rm Inn}_0(X)^T) \oplus \overline{\rm Inn}_0(X)^T,$$
where the isomorphisms are given by the Pontryagin products. Therefore we may consider that the injection from $\overline{\rm Inn}_0(X)^T$ to $H_2(\overline{\rm Inn}(X))$ in the isomorphism (\ref{alex-eq}) is induced by the Pontryagin product $\langle \bar{s}_0, - \rangle$.
\par We use $\overline{\rm Inn}_0(X)$ to compute $A$, as in Lemma \ref{simp-lem}. Since $\overline{\rm Inn}_0(X)$ acts on $X$ as translations, the isotropy subgroups are calculated as
$${\rm Stab}_{\overline{\rm Inn}_0(X)}(x) = \overline{\rm Inn}_0(X) \cap (\mathbb{Z}^{\oplus M_T} \times \{ {\rm id}_X \})$$
for any $x \in X$. Denote this isotropy subgroup by $I$, which does not depend on $x$ and hence is contained in the center of $\overline{\rm Inn}(X)$. By the property (P2) of Pontyagin products, we find
$$\sum_{x \in X} \langle \bar{s}_x, {\rm Stab}_{\overline{\rm Inn}_0(X)}(x) \rangle = \langle \overline{\rm Inn}(X), I \rangle = \langle \overline{\rm Inn}_0(X), I \rangle + \langle \bar{s}_0, I \rangle.$$
Recalling that the isomorphism (\ref{alex-eq}) was given by Pontryagin products, we find these two terms respectively contained in the two summands of (\ref{alex-eq}). The projection $\overline{\rm Inn}_0(X) \to {\rm Inn}_0(X)$ induces an isomorphism $\overline{\rm Inn}_0(X)/I \cong (1-T)M$ and hence we have
\begin{eqnarray*}
A &\cong& H_2(\overline{\rm Inn}(X))/\langle \overline{\rm Inn}(X), I \rangle\\[2pt]
&=& \frac{\langle \overline{\rm Inn}_0(X), \overline{\rm Inn}_0(X) \rangle}{\langle \overline{\rm Inn}_0(X), I \rangle} \oplus \frac{\langle \bar{s}_0, \overline{\rm Inn}_0(X)^T\rangle}{\langle \bar{s}_0, I \rangle}\\[2pt]
&\cong& (\Lambda^2(1-T)M)_T \oplus ((1-T)M)^T
\end{eqnarray*}
by Theorem \ref{main-thm} and Lemma \ref{simp-lem}, as required.
\par Let us compute the homology group $H_2^Q(X)$. By Theorem \ref{eis-thm}, we have
$$H_2^Q(X) \cong \bigoplus_{x \in E} ({\rm Stab}_{{\rm As}(X)}(x) \cap {\rm Ker}\,\epsilon)_{\rm ab}.$$
Let $\tilde{I}$ denote ${\rm Stab}_{{\rm As}(X)}(x) \cap {\rm Ker}\,\epsilon$. We remark that $A \hookrightarrow \tilde{I} \twoheadrightarrow I$ is a central extension. In fact, this extension is trivial: Since $I$ is a subgroup of the free abelian group $\mathbb{Z}^{\oplus M_T} \times \{{\rm id}_X\}$, $I$ is free abelian and hence $H^2(I, A) \cong {\rm Hom}(H_2(I), A)$ by the universal coefficient theorem. Furthermore, the second homology of an abelian group is generated by Pontryagin products, but a Pontryagin product in $I$ vanishes in $A$ as seen above. Thus, the restriction of the cohomology class of the extension ${\rm As}(X) \to \overline{\rm Inn}(X)$ to $I$ vanishes by the naturality of the universal coefficient theorem and we have $\tilde{I} \cong A \oplus I$.
\par Define a group homomorphism $\xi: \mathbb{Z}^{\oplus M_T} \to (1-T)M / (1-T)^2M$ by $\xi(e_x) = (1-T)x$. Here, we claim that $I$ is isomorphic to the kernel of $\bar{\xi} = \epsilon_0 \oplus \xi: \mathbb{Z}^{\oplus M_T} \to \mathbb{Z} \oplus (1-T)M / (1-T)^2M$. Since $x\cdot e_y = Tx+ (1-T)y$, we find that $0 \cdot f \in \xi(a)$ holds for $(a,f) \in \overline{\rm Inn}_0(X)$. If $a \in {\rm Ker}\,\bar{\xi}$, i.e., if $\epsilon_0(a) = 0$ and $0 \cdot f = (1-T)^2x$ for some $x \in M$, then we have $\bar{s}_0 \bar{s}_{(1-T)x}^{-1} (a,f) = (a,{\rm id}_X)$ and hence $(a, {\rm id}_X) \in I$. Thus, $I \cong {\rm Ker}\,\bar{\xi}$ and $\tilde{I} \cong A \oplus {\rm Ker}\, \bar{\xi}$; we obtain $H_2^Q(X) \cong (A \oplus {\rm Ker}\,\bar{\xi})^{\oplus M_T}$.
\par As the kernel ${\rm Ker}\,\bar{\xi}$ is a subgroup of the free abelian group $\mathbb{Z}^{\oplus M_T}$, it is also free. To see the rank, we consider the $\mathbb{Q}$-tensored map $\bar{\xi}_{\mathbb{Q}}: \mathbb{Q}^{\oplus M_T} \to \mathbb{Q} \oplus (\mathbb{Q} \otimes ((1-T)M)_T)$. In $\mathbb{Q}^{\oplus M_T}$, we define $e'_x = e_x - e_0$ for torsion elements $x \in M_T \backslash \{ 0 \}$, and $e'_x = e_x - (1/2)e_{2x} - (1/2)e_0$ for the other nonzero elements. The elements $e'_x$ are contained in ${\rm Ker}\, \bar{\xi}_{\mathbb{Q}}$, and a brief consideration shows that these are linearly independent. Therefore, since $\epsilon_0$ is surjective, we have
$$|M_T| - 1 \geq {\rm rank}\,({\rm Ker}\,\bar{\xi}) = {\rm dim}\,({\rm Ker}\,\bar{\xi}_{\mathbb{Q}}) \geq |\{e'_x \mid x \in M_T\backslash\{0\}\}| = |M_T| - 1,$$
which concludes that ${\rm Ker}\, \bar{\xi} \cong \mathbb{Z}^{\oplus (|M_T|-1)}.$
\end{proof}

\begin{rem}
The second homology group of a latin Alexander quandle $Q_{M,T}$ has already been computed in \cite{cla} and \cite{bimnp}. Here, a quandle is \textit{latin} if the left-multiplication maps $x * \bullet$ are bijective, and we can check that an Alexander quandle $Q_{M, T}$ is latin if and only if $1 - T$ is invertible. If $Q_{M,T}$ is latin, Theorem \ref{alex-thm} shows that $H_2^Q(Q_{M,T}) \cong (\Lambda^2 M)_T$, which is identical to the description in \cite[Corollary 4.3]{bimnp}.
\end{rem}
\subsection{Connected generalized Alexander quandles}\label{ga-sec}
Let $G$ be a group and $\phi$ an automorphism of $G$. We define a binary oparation $*$ on $G$ by $x * y = \phi(xy^{-1}) y$ for $x, y \in G$; the pair $(G,*)$ is a quandle. We call this quandle a \textit{generalized Alexander quandle} and denote it by $Q_{G,\phi}$.
\par For a generalized Alexander quandle $Q_{G,\phi}$, consider the subgroup $H \subset G$ generated by the elements $\phi(g^{-1})g$ with all $g \in G$. By the definition of the binary operation, we can easily find that for any $f \in {\rm Inn}_0(Q_{G, \phi})$, there exists $h \in H$ such that $f(x) = xh$ for any $x \in Q_{G,\phi}$; remark that $h = f(1_G)$. Since $s_{1_G}^{-1}s_g \in {\rm Inn}_0(Q_{G,\phi})$ maps $1_G \in Q_{G,\phi}$ to $\phi(g^{-1})g$, the map sending $f$ to $f(1_G)$ gives an isomorphism ${\rm Inn}_0(Q_{G,\phi}) \cong H$. Thus, each connected component $[x] \in \mathcal{O}_{Q_{G,\phi}}$ through $x \in Q_{G,\phi}$ is equal to the right coset $xH \in G/H$ and hence there is a bijection between $\mathcal{O}_{Q_{G,\phi}}$ and $G/H$; in particular, $Q_{G,\phi}$ is connected if and only if $G = H$.

\begin{thm}[Theorem \ref{ga-thm0}]
Let $G$ be a group and $\phi$ an automorphism of $G$. If the Alexander quandle $Q_{G, \phi}$ is connected and the endomorphism $1- \phi$ on $G_{\rm ab}$ is invertible, then the kernel $A$ in Theorem \ref{main-thm} is isomorphic to $H_2(G)_{\phi}$. In particular, we have
$$H_2^Q(Q_{G,\phi}) \cong H_2(G)_{\phi}.$$
\end{thm}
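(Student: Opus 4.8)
The plan is to apply Theorem \ref{main-thm} together with Lemma \ref{simp-lem} and the short exact sequence (\ref{ss-ses}), mimicking the structure of the proof of Theorem \ref{alex-thm} but replacing the abelian group $\overline{\rm Inn}_0(X)$ with the (generally nonabelian) group $G$. Write $X = Q_{G,\phi}$. By the discussion preceding the theorem, connectivity gives $G = H$, so that ${\rm Inn}_0(X) \cong G$ via $f \mapsto f(1_G)$, and under this identification the automorphism $\sigma$ of (\ref{ss-ses}) — conjugation by $\bar{s}_{1_G}$, taking $x_0 = 1_G$ — corresponds to $\phi$ itself (since $s_{1_G}^{-1}s_g s_{1_G}$ sends $1_G$ to $\phi$ of what $s_g$ sends $1_G$ to). Because $X$ is connected, $\overline{\rm Inn}_0(X) = \{0\} \times {\rm Inn}_0(X) \cong G$, and (\ref{ss-ses}) at $n = 2$ reads $0 \to H_2(G)_\phi \to H_2(\overline{\rm Inn}(X)) \to H_1(G)^\phi \to 0$. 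The hypothesis that $1 - \phi$ is invertible on $G_{\rm ab} = H_1(G)$ forces $H_1(G)^\phi = 0$, so the middle term is simply $H_2(\overline{\rm Inn}(X)) \cong H_2(G)_\phi$.

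Next I would compute the subgroup of Pontryagin products to be quotiented out. By Lemma \ref{simp-lem}, since $X$ is connected we may take $E = \{1_G\}$ and need only $\langle \bar{s}_{1_G}, {\rm Stab}_{\overline{\rm Inn}_0(X)}(1_G) \rangle$. But ${\rm Inn}_0(X) \cong G$ acts on $X = G$ by right translation $x \mapsto x \cdot f(1_G)$, which is free; hence ${\rm Stab}_{\overline{\rm Inn}_0(X)}(1_G)$ is trivial, and the entire sum of Pontryagin products vanishes. Therefore Theorem \ref{main-thm} gives $A \cong H_2(\overline{\rm Inn}(X)) \cong H_2(G)_\phi$ directly, with no quotient — this is the cleanest possible instance of the main theorem.

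For the quandle homology statement, I would invoke Theorem \ref{eis-thm}: with $E = \{1_G\}$, $H_2^Q(X) \cong ({\rm Stab}_{{\rm As}(X)}(1_G) \cap {\rm Ker}\,\epsilon)_{\rm ab}$. Note $\epsilon = \bar\epsilon \circ (\alpha \times \psi)$ and that, because $X$ is connected, $\mathbb{Z}^{\oplus\mathcal{O}_X} = \mathbb{Z}$ and $\epsilon = \alpha$ up to this identification; so ${\rm Ker}\,\epsilon = {\rm Ker}\,\alpha$. Thus I want to show $\tilde{I} := {\rm Stab}_{{\rm As}(X)}(1_G) \cap {\rm Ker}\,\alpha$ is abelian and isomorphic to $A = H_2(G)_\phi$. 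Since $\psi$ maps ${\rm Stab}_{{\rm As}(X)}(1_G)$ onto ${\rm Stab}_{\overline{\rm Inn}(X)}(1_G)$ — which, intersected with $\overline{\rm Inn}_0(X)$, is trivial as above — the kernel $A$ of $\alpha \times \psi$ sits in $1 \to A \to \tilde I \to {\rm Stab}_{\overline{\rm Inn}(X)}(1_G) \cap {\rm Ker}\,\bar\epsilon \to 1$, and the right-hand group is trivial, so $\tilde I = A$. (One checks ${\rm Stab}_{\overline{\rm Inn}(X)}(1_G) \subset \overline{\rm Inn}_0(X)$ because the $\mathbb{Z}$-component detects non-trivial $\phi$-powers acting non-trivially on the free orbit.) Hence $H_2^Q(X) \cong A_{\rm ab} = A = H_2(G)_\phi$, the last equality since $A$ is already abelian.

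The step I expect to be the main obstacle is verifying cleanly that ${\rm Stab}_{\overline{\rm Inn}(X)}(1_G)$ is trivial — equivalently that a stabilizer element must have trivial $\bar\epsilon$-image and then, being in $\overline{\rm Inn}_0(X) \cong G$ acting freely, must itself be trivial. This requires checking that an affine-type element $(n, f) \in \overline{\rm Inn}(X)$ fixing $1_G$ forces $n = 0$: if $n \neq 0$ then $\psi(\bar s_{1_G})^{-n}(n,f) \in \overline{\rm Inn}_0(X)$ sends $1_G$ to $\phi^{-n}(1_G) = 1_G$ while its image under the (free) $G$-action is non-trivial unless $f = \psi(\bar s_{1_G})^n$, and one must track the translation-part to rule this out using $G = H$. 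Everything else is a direct transcription of the Alexander-quandle argument with the abelian group replaced by $G$ and the freeness of the action doing the work that $I = 0$ cannot do there.
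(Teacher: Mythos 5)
Your proposal is correct and follows the same route as the paper's (very terse) proof: identify ${\rm Inn}_0(Q_{G,\phi})$ with $G$ acting freely by right translation with $\sigma$ corresponding to $\phi$, kill the $H_1(G)^\phi$ term in (\ref{ss-ses}) using the invertibility of $1-\phi$, observe that the Pontryagin-product subgroup vanishes because ${\rm Stab}_{\overline{\rm Inn}_0(X)}(1_G)$ is trivial by freeness, and then read off $H_2^Q$ from Theorem \ref{eis-thm} since $\tilde I$ is a central extension of the trivial group by $A$.

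One correction: the step you single out as ``the main obstacle'' --- showing that ${\rm Stab}_{\overline{\rm Inn}(X)}(1_G)$ is trivial, equivalently that it lies in $\overline{\rm Inn}_0(X)$ --- is both false and unnecessary. It is false because $\bar s_{1_G}$ itself stabilizes $1_G$ (as $1_G * 1_G = 1_G$) while $\bar\epsilon(\bar s_{1_G}) = 1$, so the full stabilizer contains an infinite cyclic subgroup mapping isomorphically onto $\mathbb{Z}$. It is unnecessary because everything in your argument only ever uses ${\rm Stab}_{\overline{\rm Inn}(X)}(1_G) \cap {\rm Ker}\,\bar\epsilon = {\rm Stab}_{\overline{\rm Inn}_0(X)}(1_G)$, whose triviality you have already correctly deduced from the freeness of the right-translation action; Lemma \ref{simp-lem} is stated precisely so that only this intersection is needed. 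Deleting the parenthetical claim and the final paragraph leaves a complete and correct proof.
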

\begin{proof}
As seen above, the group ${\rm Inn}_0(Q_{G,\phi})$ can be identified with $G$, acting on $Q_{G,\phi}$ as the right multiplication in the group, since $Q_{G,\phi}$ is connected. Since the endomorphism $1 - \phi$ on $H_1(G)$ is assumed to be invertible, the exact sequence (\ref{ss-ses}) implies that $H_2(G)_\phi \cong H_2(\overline{\rm Inn}(Q_{G,\phi}))$. By Theorem \ref{main-thm} and Lemma \ref{simp-lem}, we find the required isomorphism $A \cong H_2(G)_\phi$, since the action of ${\rm Inn}_0(Q_{G,\phi})$ on $Q_{G,\phi}$ is free.
\par The calculation of $H_2^Q(Q_{G,\phi})$ follows from Theorem \ref{eis-thm}, immediately.
\end{proof}

\begin{rem}
For a connected generalized Alexander quandle $Q_{G,\phi}$, the endomorphism $1 - \phi \in {\rm End}(G_{\rm ab})$ is surjective. Thus, the assumption of the invertibility is automatically satisfied for a rather wide class of groups; e.g., for finitely generated groups.
\par Also, even if $1 - \phi$ is not invertible, we have an exact sequence
$$0 \to H_2(G)_{\phi} \to H_2^Q(Q_{G,\phi}) \to (G_{\rm ab})^\phi \to 0$$
if $Q_{G, \phi}$ is connected, as in the proof above. If there exists a section $(G_{\rm ab})^\phi \to G$ of the abelianization map $G \to G_{\rm ab}$, this sequence splits as in the proof of Theorem \ref{alex-thm}.
\end{rem}
\subsection{Products of connected quandles}\label{prod-sec}
If $(X, *_X)$ and $(Y,*_Y)$ are quandles, we can define the \textit{product} $(X \times Y,*)$ of them; the quandle operation is given by $(x,y) * (x',y') = (x *_X x', y *_Y y')$. In this section, we consider the product of connected quandles and deduce a ``K$\ddot{\rm u}$nneth theorem" for the associated group and the second homology.

\begin{thm}[Theorem \ref{prod-thm0}]
Let $X,Y$ be connected quandles. Fixing any points $x_0 \in X, y_0 \in Y$, we define an automorphism $\sigma$ of ${\rm Inn}_0(X) \times {\rm Inn}_0(Y)$ by $\sigma(f, g) = (s_{x_0}^{-1}fs_{x_0}, s_{y_0}^{-1} g s_{y_0})$. If $Z$ is one of $X, Y, X \times Y$, let $A_Z$ be the kernel of $\alpha_Z \times \psi_Z: {\rm As}(Z) \to \mathbb{Z} \times {\rm Inn}(Z)$, as in Theorem {\rm \ref{main-thm}}. Then, we have
\begin{eqnarray*}
A_{X \times Y} &\cong& A_X \oplus A_Y \oplus ({\rm Inn}_0(X)_{\rm ab} \otimes {\rm Inn}_0(Y)_{\rm ab})_\sigma,\\
H_2^Q(X\times Y) &\cong& H_2^Q(X) \oplus H_2^Q(Y) \oplus ({\rm Inn}_0(X)_{\rm ab} \otimes {\rm Inn}_0(Y)_{\rm ab})_\sigma.
\end{eqnarray*}
\end{thm}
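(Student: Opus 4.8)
The plan is to run Theorem~\ref{main-thm} for $Z=X\times Y$, after expressing $\overline{\rm Inn}(Z)$ and $H_2(\overline{\rm Inn}(Z))$ through the corresponding data for $X$ and $Y$, and then to translate the answer into $H_2^Q$ by Theorem~\ref{eis-thm}.

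First I would pin down $\overline{\rm Inn}(X\times Y)$. For any connected quandle $W$ the subgroup ${\rm Stab}_{{\rm Inn}(W)}(w_0)$ surjects onto ${\rm Inn}(W)/{\rm Inn}_0(W)\cong\mathbb Z$ (it contains $s_{w_0}$), so ${\rm Inn}_0(W)$ already acts transitively on $W$; since $s_{(x,y)}=(s_x,s_y)$, this forces ${\rm Inn}_0(X)\times{\rm Inn}_0(Y)$, hence ${\rm Inn}(X\times Y)$, to be transitive on $X\times Y$, so $X\times Y$ is connected. Keeping track of the $\bar\epsilon$-degree then gives $\overline{\rm Inn}_0(X\times Y)={\rm Inn}_0(X)\times{\rm Inn}_0(Y)$ and $\overline{\rm Inn}(X\times Y)=({\rm Inn}_0(X)\times{\rm Inn}_0(Y))\rtimes_\sigma\mathbb Z$ with $\mathbb Z=\langle\bar t\rangle$, $\bar t:=\bar s_{(x_0,y_0)}$, and $\sigma$ the conjugation by $\bar t$, i.e.\ exactly the automorphism of the statement. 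The quandle embeddings $x\mapsto(x,y_0)$ and $y\mapsto(x_0,y)$ (quandle maps by (Q1)) induce inclusions $\iota_X\colon\overline{\rm Inn}(X)\hookrightarrow\overline{\rm Inn}(X\times Y)$, $\iota_Y\colon\overline{\rm Inn}(Y)\hookrightarrow\overline{\rm Inn}(X\times Y)$ sending $\bar s_{x_0}$ and $\bar s_{y_0}$ to $\bar t$; all the comparisons below are natural through these.

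Next I would compute $A_{X\times Y}$. Applying the case $n=2$ of (\ref{ss-ses}) to $\overline{\rm Inn}_0(Z)\to\overline{\rm Inn}(Z)\to\mathbb Z$ and the K\"unneth theorem to $H_*({\rm Inn}_0(X)\times{\rm Inn}_0(Y))$ (no Tor terms, $H_0$ being free), the $X$- and $Y$-parts split off through $\iota_{X*},\iota_{Y*}$ and one obtains
$$H_2(\overline{\rm Inn}(X\times Y))\cong\iota_{X*}H_2(\overline{\rm Inn}(X))\oplus\iota_{Y*}H_2(\overline{\rm Inn}(Y))\oplus C,\qquad C:=\bigl({\rm Inn}_0(X)_{\rm ab}\otimes{\rm Inn}_0(Y)_{\rm ab}\bigr)_\sigma,$$
with $C$ the image of the K\"unneth cross term of $H_2(\overline{\rm Inn}_0(Z))$. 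By Lemma~\ref{simp-lem} the subgroup divided out in (\ref{main-eq}) is $\langle\bar t,{\rm Stab}_{\overline{\rm Inn}_0(Z)}(x_0,y_0)\rangle$; since that stabilizer is ${\rm Stab}_{\overline{\rm Inn}_0(X)}(x_0)\times{\rm Stab}_{\overline{\rm Inn}_0(Y)}(y_0)$ and $\iota_X(\bar s_{x_0})=\iota_Y(\bar s_{y_0})=\bar t$, properties (P1)--(P2) rewrite it as $\iota_{X*}\langle\bar s_{x_0},{\rm Stab}_{\overline{\rm Inn}_0(X)}(x_0)\rangle\oplus\iota_{Y*}\langle\bar s_{y_0},{\rm Stab}_{\overline{\rm Inn}_0(Y)}(y_0)\rangle$, which lies in the first two summands and meets $C$ trivially. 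Quotienting and invoking Theorem~\ref{main-thm} together with Lemma~\ref{simp-lem} for $X$ and for $Y$ yields $A_{X\times Y}\cong A_X\oplus A_Y\oplus C$.

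Finally, for $H_2^Q$ I would use Theorem~\ref{eis-thm}: for connected $W$, $H_2^Q(W)\cong(\tilde I_W)_{\rm ab}$ with $\tilde I_W:={\rm Stab}_{{\rm As}(W)}(w_0)\cap{\rm Ker}\,\epsilon$, and, since ${\rm Ker}\,\psi_W$ is central, $\tilde I_W$ is a central extension of $S_W:={\rm Stab}_{{\rm Inn}_0(W)}(w_0)$ by $A_W$. For $W=X\times Y$ this is a central extension of $S_X\times S_Y$ by $A_X\oplus A_Y\oplus C$; the maps $\iota_X^{\rm As}\colon\tilde I_X\to\tilde I_{X\times Y}$ and $\iota_Y^{\rm As}$ exhibit its restriction to $S_X$ (resp.\ $S_Y$) as the extension $\tilde I_X$ (resp.\ $\tilde I_Y$) with kernel enlarged to $A_X\oplus A_Y\oplus C$, so the only remaining datum of the extension class is the cross commutator pairing $\beta\colon S_X^{\rm ab}\otimes S_Y^{\rm ab}\to C$. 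The crux is that $\beta=0$: conjugating $\bar s_{x_0}$ by an element of ${\rm Stab}_{\overline{\rm Inn}(X)}(x_0)$ returns $\bar s_{x_0}$, so $\bar s_{x_0}$ is central there, whence $\sigma$ fixes $S_X$ pointwise and the image of $S_X$ in ${\rm Inn}_0(X)_{\rm ab}$ is $\sigma$-fixed; but the case $n=1$ of (\ref{ss-ses}) for $X$ reads $0\to({\rm Inn}_0(X)_{\rm ab})_\sigma\to\mathbb Z\to\mathbb Z\to0$, so $1-\sigma$ is onto ${\rm Inn}_0(X)_{\rm ab}$, and for $\sigma$-fixed $a=(1-\sigma)a'$ and $\sigma$-fixed $b$ one has $a\otimes b=(1-\sigma)(a'\otimes b)$, which is $0$ in $C$. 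Since the Pontryagin-product description of $A_{X\times Y}$ from the previous step identifies $\beta$ with the composite $S_X^{\rm ab}\otimes S_Y^{\rm ab}\to{\rm Inn}_0(X)_{\rm ab}\otimes{\rm Inn}_0(Y)_{\rm ab}\to C$, this gives $\beta=0$; then the extension class of $\tilde I_{X\times Y}$ forces $\tilde I_{X\times Y}\cong\tilde I_X\times\tilde I_Y\times C$ as a group, and abelianizing gives $H_2^Q(X\times Y)\cong H_2^Q(X)\oplus H_2^Q(Y)\oplus C$. I expect this last paragraph to be the main obstacle --- the precise identification of $\beta$ with the Pontryagin composite, its vanishing, and the reconstruction of $\tilde I_{X\times Y}$ as a product; the earlier steps are a naturality-and-K\"unneth computation once the structure of $\overline{\rm Inn}(X\times Y)$ is in hand.
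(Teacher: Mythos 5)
Your proposal is correct and follows essentially the same route as the paper: identify $\overline{\rm Inn}_0(X\times Y)$ with ${\rm Inn}_0(X)\times{\rm Inn}_0(Y)$, decompose $H_2(\overline{\rm Inn}(X\times Y))$ via the sequence (\ref{ss-ses}) and the K\"unneth theorem, quotient by the Pontryagin products of the split stabilizer to get $A_{X\times Y}$, and then kill the cross term of the extension class of $\tilde I_{X\times Y}$ using the surjectivity of $1-\sigma$ on ${\rm Inn}_0(X)_{\rm ab}$ together with the $\sigma$-invariance of the stabilizer images. Your phrasing of the cross term as a commutator pairing is just the paper's $f_B$ in different language, and the vanishing argument is identical (the only slip, inessential, is the claim that ${\rm Inn}(W)/{\rm Inn}_0(W)\cong\mathbb Z$, which fails for finite quandles, though the Frattini-type transitivity argument it supports still goes through).
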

\begin{proof}
First, we check that ${\rm Inn}_0(X \times Y) \cong {\rm Inn}_0(X) \times {\rm Inn}_0(Y)$. There exist obvious projections from $\overline{\rm Inn}(X \times Y)$ onto $\overline{\rm Inn}(X)$ and $\overline{\rm Inn}(Y)$ and these maps preserve the first component $\mathbb{Z}$. Hence we have a group homomorphism $\eta: {\rm Inn}_0(X \times Y) \to {\rm Inn}_0(X) \times {\rm Inn}_0(Y)$. Since we can regard the both sides as subgroups of ${\rm Aut}(X\times Y)$, $\eta$ is injective. For any $f \in {\rm Inn}_0(X)$, we can take $x_i \in X$ and $\epsilon_i = \pm1$ with $\sum_i \epsilon_i = 0$ such that $f = s_{x_1}^{\epsilon_1} \cdots s_{x_k}^{\epsilon_k}$, and then we find
$$\eta\bigl(\bar{s}_{(x_1,y_0)}^{\epsilon_1} \cdots \bar{s}_{(x_k,y_0)}^{\epsilon_k}\bigr) = (f, {\rm id}_Y),$$
which implies that the image of $\eta$ contains ${\rm Inn}_0(X) \times \{{\rm id}_Y\}.$ In the same way we find ${\rm Im}\,\eta \supset \{{\rm id}_X\} \times {\rm Inn}_0(Y)$, and hence $\eta$ is surjective.
\par By applying (\ref{ss-ses}) to $X, Y, X \times Y$, we have exact sequences; we here denote them by ${\rm (\ref{ss-ses})}_X, {\rm (\ref{ss-ses})}_Y, {\rm (\ref{ss-ses})}_{X \times Y}$, respectively. Let $B$ denote $({\rm Inn}_0(X)_{\rm ab} \otimes {\rm Inn}_0(Y)_{\rm ab})_\sigma$. By the K$\ddot{\rm u}$nneth theorem, the first and third columns in the following diagram are isomorphisms:
$${\footnotesize\xymatrix@C=10pt{
H_2({\rm Inn}_0(X))_\sigma \oplus H_2({\rm Inn}_0(Y))_\sigma \oplus B \ar@{^{(}->}[r] \ar[d]^-{\cong} & H_2(\overline{\rm Inn}(X)) \oplus H_2(\overline{\rm Inn}(Y)) \oplus B \ar@{->>}[r] \ar@{.>}[d]^-{\iota} & H_1({\rm Inn}_0(X))^\sigma \oplus H_1({\rm Inn}_0(Y))^\sigma \ar[d]^-{\cong}\\
H_2({\rm Inn}_0(X\times Y))_\sigma \ar@{^{(}->}[r] & H_2(\overline{\rm Inn}(X \times Y)) \ar@{->>}[r] & H_1({\rm Inn}_0(X \times Y))^\sigma.
}}$$
Here, the first row is exact by ${\rm (\ref{ss-ses})}_X$ and ${\rm (\ref{ss-ses})}_Y$, and so is the second by ${\rm (\ref{ss-ses})}_{X \times Y}$. Define $\iota_X: \overline{\rm Inn}(X) \to \overline{\rm Inn}(X \times Y)$ and $\iota_Y: \overline{\rm Inn}(Y) \to \overline{\rm Inn}(X \times Y)$ by $\iota_X(a, f) = (a, f, s_{y_0}^a)$ and $\iota_Y(a, g) = (a, s_{x_0}^a, g)$, and let $\iota_B: B \to H_2(\overline{\rm Inn}(X \times Y))$ be the composite of the cross product $\times: B \to H_2({\rm Inn}_0(X \times Y))_\sigma$ and the inclusion homomorphism. Denoting $\iota_{X*} \oplus \iota_{Y*} \oplus \iota_B$ by $\iota$, we find that the above diagram commutes. Thus, the five-lemma shows that $\iota$ is isomorphic.
\par When $(Z,z_0)$ is one of $(X,x_0),(Y,y_0), (X \times Y, (x_0,y_0))$, let $I_Z$ denote ${\rm Stab}_{\overline{\rm Inn}_0(Z)}(z_0)$. The kernel $A_{X \times Y}$ is the quotient of $H_2(\overline{\rm Inn}(X \times Y))$ by $\langle \bar{s}_{(x_0, y_0)}, I_{X\times Y} \rangle$. Because $I_{X \times Y} \cong I_X \times I_Y$, we have
$$\langle \bar{s}_{(x_0, y_0)}, I_{X \times Y} \rangle = \langle \bar{s}_{x_0}, I_X \rangle + \langle \bar{s}_{y_0}, I_Y \rangle,$$
where we regard $H_2(\overline{\rm Inn}(X))$ and $H_2(\overline{\rm Inn}(Y))$ as subgroups of $H_2(\overline{\rm Inn}(X \times Y))$ by the isomorphism $\iota$. Since $A_X \cong H_2(\overline{\rm Inn}(X))/\langle \bar{s}_{x_0}, I_X \rangle$ and $A_Y \cong H_2(\overline{\rm Inn}(Y))/\langle \bar{s}_{y_0}, I_Y \rangle$, we obtain the first isomorphism of the theorem.
\par By Theorem \ref{eis-thm}, the second quandle homology $H^Q_2(X \times Y)$ is isomorphic to the abelianization $(\tilde{I}_{X \times Y})_{\rm ab}$ of a subgroup $\tilde{I}_{X \times Y} = {\rm Stab}_{{\rm As}(X \times Y)}(x_0,y_0) \cap {\rm Ker}\, \epsilon$ of the isotropy group. We should remark that $A_{X \times Y} \hookrightarrow \tilde{I}_{X\times Y} \twoheadrightarrow I_{X \times Y}$ is a central extension. Let $f \in H^2(\overline{\rm Inn}(X \times Y), A_{X \times Y})$ be the cohomology class associated with the central extension $A_{X\times Y} \hookrightarrow {\rm As}(X \times Y) \twoheadrightarrow \overline{\rm Inn}(X \times Y)$. The extension $\tilde{I}_{X \times Y} \to I_{X \times Y}$ is given by the restriction of $f$ to $I_{X \times Y}$.
\par By Theorem \ref{main-thm}, $f$ corresponds to the projection from $H_2(\overline{\rm Inn}(X \times Y))$ onto $A_{X \times Y}$. We have
\begin{eqnarray}
&&H^2(\overline{\rm Inn}(X \times Y), A_{X \times Y})\notag\\
&\cong& {\rm Hom}(H_2(\overline{\rm Inn}(X)) \oplus H_2(\overline{\rm Inn}(Y)) \oplus B, A_{X \times Y})\notag\\
&\cong& H^2(\overline{\rm Inn}(X), A_{X \times Y}) \oplus H^2(\overline{\rm Inn}(Y), A_{X \times Y}) \oplus {\rm Hom}(B, A_{X \times Y}) \label{h2-eq}
\end{eqnarray}
and, as seen above, the projection of $H_2(\overline{\rm Inn}(X \times Y))$ onto $A_{X \times Y}$ is the direct sum of the two projections $f_X: H_2(\overline{\rm Inn}(X)) \to A_X$ and $f_Y: H_2(\overline{\rm Inn}(Y)) \to A_Y$ and the identity map ${\rm id}_B$. Thus, under the isomorphism (\ref{h2-eq}), $f$ is equal to the sum of $f_X \in H^2(\overline{\rm Inn}(X), A_X) \subset H^2(\overline{\rm Inn}(X), A_{X \times Y})$, $f_Y \in H^2(\overline{\rm Inn}(Y), A_Y) \subset H^2(\overline{\rm Inn}(Y), A_{X \times Y})$, and ${\rm id}_B \in {\rm Hom}(B, B) \subset {\rm Hom}(B, A_{X \times Y})$. Furthermore, recalling that $I_{X \times Y} \cong I_X \times I_Y$, we have
\begin{align}
&H^2(I_{X \times Y}, A_{X \times Y})\notag\\
&\cong H^2(I_X, A_{X \times Y}) \oplus H^2(I_Y, A_{X \times Y}) \oplus {\rm Hom}(H_1(I_X)\otimes H_1(I_Y), A_{X \times Y}). \label{ixy-eq}
\end{align}
Since the restriction homomorphism $H^2(\overline{\rm Inn}(X), A_{X \times Y}) \to H^2(I_{X \times Y}, A_{X \times Y})$ preserves the decompositions (\ref{h2-eq}) and (\ref{ixy-eq}), we may regard $f|_{I_{X \times Y}}$ as the sum of $f_X|_{I_X} \in H^2(I_X, A_X)$, $f_Y|_{I_Y} \in H^2(I_Y, A_Y)$, and the homomorphism $f_B: H_1(I_X) \otimes H_1(I_Y) \to B$ induced by the inclusions.
\par We claim that $f_B$ is zero. In fact, since $X$ is connected, the endomorphism $1-\sigma$ on $\overline{\rm Inn}_0(X)_{\rm ab}$ is surjective. Therefore, for any $g_X \in H_1(I_X)$ and $g_Y \in H_1(I_Y)$, we can take $h_X \in \overline{\rm Inn}_0(X)_{\rm ab}$ such that $(1 - \sigma)(h_X) = g_X$ to find that $(1-\sigma)(h_X \otimes g_Y) = g_X \otimes g_Y$ in $\overline{\rm Inn}_0(X)_{\rm ab} \otimes \overline{\rm Inn}_0(Y)_{\rm ab}$; remark that $\sigma(g_Y) = g_Y$ because $\bar{s}_{y_0}$ commutes with $I_Y$. Hence, we have $f_B = 0$ as claimed.
\par Thus, the extension of $I_{X \times Y}$ by $B$ is trivial and we have
$$\tilde{I}_{X \times Y} \cong \tilde{I}_X \times \tilde{I}_Y \times B,$$
where $\tilde{I}_X$ (resp. $\tilde{I}_Y$) is the extension of $I_X$ by $A_X$ (resp. $I_Y$ by $A_Y$) associated with $f_X|_{I_X}$ (resp. $f_Y|_{I_Y}$). Since $\tilde{I}_X$ (resp. $\tilde{I}_Y$) is isomorphic to ${\rm Stab}_{{\rm As}(X)}(x_0) \cap {\rm Ker}\,\epsilon$ (resp. ${\rm Stab}_{{\rm As}(Y)}(y_0) \cap {\rm Ker}\,\epsilon$), we take the abelianizations to conclude that $H_2^Q(X \times Y) \cong H_2^Q(X) \oplus H_2^Q(Y) \oplus B$.
\end{proof}
\subsection*{Acknowledgements}
The author would like to express his gratitude to Takefumi Nosaka for the helpful comments on an earlier version of the paper. This work was supported by JSPS KAKENHI Grant Number JP20K14309.


\begin{thebibliography}{HD82}




\normalsize
\baselineskip=17pt

\bibitem{bimnp} Bakshi, R.P.; Ibarra, D.; Mukherjee, S.; Nosaka, T.; Przytycki, J.H.; \textit{Schur multipliers and second quandle homology}. J. Algebra 552 (2020), 52--67.
\bibitem{bro} Brown, K.S.; \textit{Cohomology of groups}. Graduate Texts in Mathematics, 87. Springer-Verlag, New York-Berlin, 1982.
\bibitem{cegs} Carter, J.S.; Elhamdadi, M.; Gra$\tilde{\rm n}$a, M.; Saito, M.; \textit{Cocycle knot invariants from quandle modules and generalized quandle homology}. Osaka J. Math. 42 (2005), 499--541.
\bibitem{cjkls} Carter, J.S.; Jelsovsky, D.; Kamada, S.; Langford, L.; Saito, M.; \textit{Quandle cohomology and state-sum invariants of knotted curves and surfaces}. Trans. Amer. Math. Soc. 355 (2003), 3947--3989.
\bibitem{cla} Clauwens, F.J.-B.J.; \textit{The adjoint group of an Alexander quandle}. arxiv:1011.1587
\bibitem{ed-li} Edmonds, A. L.; Livingston, C.; \textit{Symmetric representations of knot groups}.
Topology Appl. 18 (1984), 281--312.
\bibitem{eis2} Eisermann, M.; \textit{Quandle coverings and their Galois correspondence}. Fund. Math. 225 (2014), 103--168.
\bibitem{el-ne} Elhamdadi, M.; Nelson, A.; \textit{Quandles---an introduction to the algebra of knots}. Student Mathematical Library, 74. Amer. Math. Soc., 2015.
\bibitem{hil} Hillman, J. A.; \textit{Sections of surface bundles}. Interactions between low-dimensional topology and mapping class groups, Geom. Topol. Monogr., 19 (2015), 1--19.
\bibitem{joy} Joyce, D.; \textit{A classifying invariant of knots, the knot quandle}. J. Pure Appl. Algebra 23 (1982), 37--65.
\bibitem{li-ne} Litherland, R. A.; Nelson, S.; \textit{The Betti numbers of some finite racks}. J. Pure Appl. Algebra 178 (2003), 187--202.
\bibitem{mat} Matveev, S.V.; \textit{Distributive groupoids in knot theory}. (Russian) Mat. Sb. (N.S.) 119(161) (1982), 78--88. (English translation: Math. USSR-Sb. 47 (1984), 73--83.)
\bibitem{nos0} Nosaka, T.; \textit{On homotopy groups of quandle spaces and the quandle homotopy invariant of links. Topology Appl}. 158 (2011), 996--1011.
\bibitem{nos1} Nosaka, T.; \textit{Quandles and topological pairs. Symmetry, knots, and cohomology}. SpringerBriefs in Mathematics. Springer, Singapore, 2017.
\bibitem{nos2} Nosaka, T.; \textit{Central extensions of groups and adjoint groups of quandles}. Geometry and analysis of discrete groups and hyperbolic spaces, 167--184,
RIMS K$\hat{\rm o}$ky$\hat{\rm u}$roku Bessatsu, B66, Res. Inst. Math. Sci., Kyoto, 2017.
\end{thebibliography}
\end{document}